\newtheorem{Theorem}{Theorem}[section]
\newtheorem{Lemma}[Theorem]{Lemma}
\newtheorem{Prop}[Theorem]{Proposition}
\newtheorem{Rem}[Theorem]{Remark}
\newtheorem{Exa}[Theorem]{Example}
\def\cL{\mathcal{L}}
\def\cZ{\mathcal{Z}}
\def\bbD{\mathbb{D}} 
\def\Erw{\mathbb{E}}
\def\L{\mathbb{L}}
\def\N{\mathbb{N}}
\def\Prob{\mathbb{P}} 
\def\R{\mathbb{R}}
\def\bfE{\text{\rm\bfseries E}}
\def\bfe{\text{\rm\bfseries e}}
\def\bfP{\text{\rm\bfseries P}}
\def\eps{\varepsilon}
\def\vph{\varphi}
\def\vth{\vartheta}
\def\1{\vec{1}}
\def\3{{\ss}}
\def\eqdist{\stackrel{d}{=}}
\def\idist{\stackrel{d}{\to}}
\def\weakly{\stackrel{w}{\to}}
\def\RA{\Rightarrow}
\def\wh{\widehat}
\def\geq{\geqslant}
\def\IRg{\R_{\scriptscriptstyle >}}
\def\IRge{\R_{\scriptscriptstyle\geqslant}}
\def\IRl{\R_{\scriptscriptstyle <}}
\def\Geom{\textit{Geom}\hspace{1pt}}
\begin{document}

\title*{Linear fractional Galton-Watson processes in random environment and perpetuities}
\titlerunning{Linear fractional GWPRE and perpetuities}
\author{Gerold Alsmeyer$^{1}$} 
\institute{$^{1}$ Inst.~Math.~Stochastics, Department
of Mathematics and Computer Science, University of M\"unster,
Orl\'eans-Ring 10, D-48149 M\"unster, Germany.\at
\email{gerolda@math.uni-muenster.de}\at
Work partially funded by the Deutsche Forschungsgemeinschaft (DFG) under Germany's Excellence Strategy EXC 2044--390685587, Mathematics M\"unster: Dynamics--Geometry--Structure.}

\maketitle

\abstract{Linear fractional Galton-Watson branching processes in i.i.d.~random environment are, on the quenched level, intimately connected to random difference equations by the evolution of the random parameters of their linear fractional marginals. On the other hand, any random difference equation defines an autoregressive Markov chain (a random affine recursion) which can be positive recurrent, null recurrent and transient and which, as the forward iterations of an iterated function system, has an a.s.~convergent counterpart in the positive recurrent case given by the corresponding backward iterations. The present expository article aims to provide an explicit view at how these aspects of random difference equations and their stationary limits, called perpetuities, enter into the results and the analysis, especially  in quenched regime. Although most of the results presented here are known, we hope that the offered perspective will be welcomed by some readers.}

\bigskip

{\noindent \textbf{AMS 2020 subject classifications:}
60J80 (60K37,60H25) \ }

{\noindent \textbf{Keywords:} Galton-Watson processes in i.i.d.~random environment, linear fractional distribution, iterated function system, random difference equation, perpetuity, extinction probability, Yaglom-type limit law}

\section{Introduction}

Let us begin with a disclaimer. This work about Galton-Watson branching processes with linear fractional offspring distributions in i.i.d.~random environment is neither a research paper nor a survey. It is rather meant as an attempt to provide, via a collection of selected results, some (hopefully) new vantage points of the interesting and rather explicit connections between this class of branching processes when studied in quenched regime and random difference equations and their stationary limits, called perpetuities. The latter have attracted a lot of interest in the last two decades, not at least due to their appearance in various other fields of probability theory. We refer to the recent monograph by Buraczewski, Damek and Mikosch \cite{BurDamMik:16} for further information and literature. Another recent monograph by Kersting and Vatutin \cite{KerstingVatutin:17} provides an excellent account of the current state-of-the-art of branching processes in random environment and is here especially recommended in places where this text remains terse on accounting for relevant references. To keep the presentation at reasonable length while stressing our particular perspective, we restrict ourselves to a collection of results of moderate technical level, and mostly in quenched regime, that nicely illustrate the interplay between linear fractional branching in random environment and random difference equations. Owing to the same constraint, only the subcritical case is considered at greater length, while keeping the sections on supercritical and critical processes relatively short which we deem sufficient for our purposes.

\vspace{.1cm}
As a motivation, consider the classical Galton-Watson branching process (GWP) $(Z_{n})_{n\ge 0}$ with $Z_{0}=1$ and offspring generating function (g.f.)
$$ f(s)\ =\ \sum_{n\ge 0}p_{n}s^{n} $$ 
for $s\in [0,1]$. Then $f^{n}$, the $n$-fold iteration of $f$, equals the g.f. of $Z_{n}$ for each $n\in\N_{0}$, where $\N_{0}=\{0,1,2,\ldots\}$ and $f^{0}(s):=s$. Among the very few examples that allow to compute all $f^{n}$ in closed form from $f$, the presumably most prominent one is the linear fractional case when
\begin{equation}\label{eq:def LF gf}
\frac{1}{1-f(s)}\ =\ \frac{a}{1-s}+b
\end{equation}
for parameters $a,b\in\IRg=(0,\infty)$, $a+b\ge 1$, and $s\in [0,1)$. Let us write $LF(a,b)$ for the associated distribution on $\N_{0}$ which, as can be seen from \eqref{eq2:def LF gf}, is a mixture of $\delta_{0}$, the point mass at $0$, and a geometric distribution on the positive integers $\N$ which has parameter $p$ and is denoted $\Geom_{+}(p)$. It is in fact a pure geometric law with parameter $a$ iff $a+b=1$, so
$$ LF(a,1-a)\ =\ \Geom_{+}(a). $$
Simple computations show that
\begin{equation}\label{eq2:def LF gf}
f(s)\ =\ p_{0}\,+\,(1-p_{0})\frac{ps}{1-(1-p)s}
\end{equation}
with $(p_{0},p)$ determined by the equations
\begin{align}\label{eq:(a,b)<->(p_0,p)}
a\ =\ \frac{p}{1-p_{0}}\quad\text{and}\quad b\ =\ \frac{1-p}{1-p_{0}}.
\end{align}
Conversely, $p=a(a+b)^{-1}$ and $p_{0}=(a+b-1)(a+b)^{-1}$. {\color{black} Therefore, by what has been stated above,
\begin{equation}\label{eq:LF->Geomplus}
LF(a,b)\ =\ \frac{a+b-1}{a+b}\,\delta_{0}\ +\ \frac{1}{a+b}\,\Geom_{+}\left(\frac{a}{a+b}\right).
\end{equation}
}
We further note that the offspring mean $m$, say, satisfies $m:=f'(1)=\sum_{n\ge 1}np_{n}=a^{-1}$, that $f''(1)=2a^{-2}b$, and that the extinction probability equals $q=b^{-1}(a+b-1)$ in the supercritical case $m>1$. For further details, the reader may consult the classical monograph by Athreya and Ney \cite[Sect.~A.4]{Athreya+Ney:72}. Finally, we point out for later use that, if $a+b=1$ and thus $f$ belongs to the geometric law $\Geom_{+}(a)$ on $\N$, then, for any $\gamma\in (0,1]$, the g.f.
\begin{equation}\label{eq:geom law transformation}
\frac{f(\gamma s)}{f(\gamma)}\ =\ \frac{(1-(1-a)\gamma)s}{1-(1-a)\gamma s},\quad s\in [0,1]
\end{equation}
belongs to the geometric law $\Geom_{+}(1-(1-a)\gamma)$.

\vspace{.2cm}
Putting $\vph(s):=(1-s)^{-1}$ for $s\in [0,1)$, which is a bijection from $[0,1)$ to $[1,\infty)$, and $g(s):=as+b$ for $s\in\R$, Eq.~\eqref{eq:def LF gf} may be restated as 
\begin{gather*}
\vph\circ f(s)\ =\ g\circ\vph(s)\\
\shortintertext{or, equivalently,}
f(s)\ =\ \vph^{-1}\circ g\circ\vph(s)
\end{gather*}
for $s\in [0,1)$. Therefore the iterations of $f$, viewed as a dynamical system on $[0,1)$, coincide up to conjugation with respect to $\vph$, with the iterations of $g$, viewed as a dynamical system on $[1,\infty)$, giving
\begin{equation*}
f^{n}(s)\ =\ \vph^{-1}\circ g^{n}\circ\vph(s)
\end{equation*}
for each $n\in\N_{0}$. Using $g^{n}(s)=a^{n}s+b(a^{n-1}+...+a+1)$, we thus find
\begin{equation*}
\frac{1}{1-f^{n}(s)}\ =\ \frac{a^{n}}{1-s}\,+\,b(a^{n-1}+\ldots+a+1).
\end{equation*}
and therefore $\cL(Z_{n})=LF(a^{n},b(a^{n-1}+\ldots+1))$, where $\cL(X)$ means the law of $X$. This conjugation argument or, equivalently, the use of \eqref{eq:def LF gf} rather than  \eqref{eq2:def LF gf} to find the iterations of $f$ and thus the laws of all $Z_{n}$ is easier than the approach described in \cite{Athreya+Ney:72}. It may also be found in \cite[p.\,3ff]{KerstingVatutin:17}.

\vspace{.4cm}
The last observation becomes even more striking in the situation when the offspring laws are still linear fractional but varying with respect to an i.i.d.~random environment, thus leading to a Galton-Watson process in random environment (GWPRE). More precisely, let $\bfe:=(A_{n},B_{n})_{n\ge 1}$ be a sequence of i.i.d.~random vectors (the environment) with generic copy $(A,B)$ such that
\begin{equation}\label{eq:parameter settings}
\Prob(A>0,\,B>0,\,A+B\ge 1)\,=\,1.
\end{equation}
Put $\bfe_{n}:=(A_{n},B_{n})$, $\bfe_{1:n}:=(\bfe_{1},\ldots,\bfe_{n})$ and $\bfe_{\geq n}:=(\bfe_{n},\bfe_{n+1},\ldots)$, so $\bfe=\bfe_{\geq 1}$.
Define the \emph{random g.f.} $f_{n}=f(\bfe_{n},\cdot)$ by
$$ \frac{1}{1-f_{n}(s)}\ =\ \frac{A_{n}}{1-s}\,+\,B_{n} $$
for $n\in\N$. The pertinent random linear fractional distribution is denoted by $(P_{n,k})_{k\ge 0}$, with generic copy $(P_{k})_{k\ge 0}=LF(A,B)$. Suppose that, conditioned upon $\bfe_{1:n}$, the members of the $(n-1)^{th}$ generation produce offspring in accordance with the linear fractional distribution $(P_{n,k})_{k\ge 0}=LF(A_{n},B_{n})$ having g.f. $f_{n}$. Then 
$$ f_{1:n}\ :=\ f_{1}\circ\ldots\circ f_{n} $$
equals the g.f. of (the quenched law of) $Z_{n}$ given $\bfe_{1:n}$, and also given $\bfe$. It satisfies
\begin{equation}\label{eq:basic identity for f_1:n}
\vph\circ f_{1:n}(s)\ =\ \frac{1}{1-f_{1:n}(s)}\ =\ \frac{\Pi_{n}}{1-s}\,+\,R_{n}\ =\ g_{1:n}\circ\vph(s)
\end{equation}
for $s\in [0,1)$, where 
$$\ g_{n}(x)=g(\bfe_{n},x):=A_{n}x+B_{n},\quad\Pi_{n}:=\prod_{k=1}^{n}A_{k}\quad\text{and}\quad R_{n}:=\sum_{k=1}^{n}\Pi_{k-1}B_{k} $$ 
for $n\in\N$. We thus see that all quenched laws are linear fractional, namely
\begin{gather}\label{eq:quenched law Z_n}
\cL(Z_{n}|\bfe_{1:n})\ =\ \cL(Z_{n}|\bfe)\ =\ LF(\Pi_{n},R_{n})
\intertext{with g.f. \color{black}(see also \eqref{eq:LF->Geomplus})}
f_{1:n}(s) =\ \frac{\Pi_{n}+R_{n}-1}{\Pi_{n}+R_{n}}\,+\,\frac{1}{\Pi_{n}+R_{n}}\cdot\frac{\Pi_{n}s}{\Pi_{n}+R_{n}(1-s)}\label{eq:quenched gf Z_n}
\end{gather}
for each $n\in\N$, and that, up to conjugation, $(f_{1:n})_{n\ge 0}$ equals the sequence of backward iterations of the i.i.d.~affine linear random maps $g_{1},g_{2},\ldots$. The corresponding forward iterations $g_{n:1}(x):=g_{n}\circ\ldots\circ g_{1}(x)$, called \emph{iterated function system (IFS)}, form a Markov chain on $[1,\infty)$ with initial state $x$. This chain has been extensively studied in the literature, see e.g.~\cite{Kesten:73,Vervaat:79,GolMal:00,AlsIksRoe:09} and especially the recent monographs \cite{BurDamMik:16,Iksanov:16}, and we will review some of its essential properties in the next section. In view of these observations it appears to be natural to study properties of the linear fractional GWPRE as just introduced by drawing on results about iterations of the $g_{n}$. As an immediate consequence of \eqref{eq:basic identity for f_1:n}, we have that
\begin{equation}\label{eq:survival probab quenched}
q_{n}(\bfe_{1:n})\ :=\ \Prob(Z_{n}=0|\bfe_{1:n})\ =\ f_{1:n}(0)\ =\ 1-\frac{1}{\Pi_{n}+R_{n}}\quad\text{a.s.}
\end{equation}
for all $n\ge 1$, and we let
$$ q(\bfe)\ :=\ \lim_{n\to\infty}\Prob(Z_{n}=0|\bfe_{1:n}) $$
denote the quenched extinction probability of $(Z_{n})_{n\ge 0}$ given $\bfe$.

\vspace{.2cm}
\emph{Reversing the environment}. The trivial fact that $\bfe_{1:n}\eqdist\bfe_{n:1}$ for each $n\in\N$ allows us to study the given GWPRE up to any time $n$ under the time-reversed environment $\bfe_{n:1}$ without changing its (annealed) law. On the other hand, the quenched laws are naturally different, but the fact that they have the same distribution (as random measures, see \eqref{eq:equal quenched laws} below) will facilitate assertions about quenched asymptotic behavior that are more tangible than those without time-reversal. Let us define
\begin{gather}
\bfP\,:=\,\Prob(\cdot|\bfe),\quad\bfP^{(1:n)}\,:=\,\Prob(\cdot|\bfe_{1:n})\quad\text{and}\quad\bfP^{(n:1)}\,:=\,\Prob(\cdot|\bfe_{n:1})\nonumber
\intertext{with corresponding expectations $\bfE,\,\bfE^{(1:n)}$ and $\bfE^{(n:1)}$. Note that}
\bfP^{(1:n)}((Z_{0},\ldots,Z_{n})\in\cdot)\ \eqdist\ \bfP^{(n:1)}((Z_{0},\ldots,Z_{n})\in\cdot)\label{eq:equal quenched laws}
\end{gather}
for each $n\in\N$, in particular,
\begin{gather}
\bfP^{(n:1)}(Z_{n}\in\cdot)\ =\ LF\left(\Pi_{n},\Pi_{n}\sum_{k=1}^{n}\Pi_{k}^{-1}B_{k}\right)
\label{eq:quenched law Z_n bw}
\shortintertext{and}
q_{n}(\bfe_{n:1})\ :=\ \Prob(Z_{n}=0|\bfe_{n:1})\ =\ 1-\frac{1}{\Pi_{n}(1+\sum_{k=1}^{n}\Pi_{k}^{-1}B_{k})}\quad\text{a.s.}\label{eq:survival probab quenched bw}
\end{gather}
as one can easily check (see \eqref{eq:quenched law Z_n} and \eqref{eq:survival probab quenched}).

\vspace{.2cm}
For GWP's in i.i.d.~random environment, the usual distinction between subcritical, critical and supercritical case is based on the asymptotic behavior of the logarithm of the quenched mean $\log\bfE Z_{n}$. Provided this quantity is a.s.~finite for all $n$, it constitutes an ordinary random walk, denoted $(S_{n})_{n\ge 0}$ throughout, with generic increment $\log f'(1)$ which in the present situation equals $-\log A$.
In fact, if $Z_{0}=1$, then
$$ \log\bfE Z_{n}\ =\ -\log\Pi_{n}\ =\ -\sum_{k=1}^{n}\log A_{k}\ =:\ S_{n}\quad\text{a.s.} $$
for all $n\ge 0$. Depending on the fluctuation-type of this walk, namely
\begin{description}[(b)]\itemsep4pt
\item[] positive divergence $(S_{n}\to\infty\text{ a.s.})$,
\item[] negative divergence $(S_{n}\to-\infty\text{ a.s.})$,
\item[] oscillation $\displaystyle\left(\limsup_{n\to\infty}S_{n}=+\infty\text{ and }\liminf_{n\to\infty}S_{n}=-\infty\text{ a.s.}\right)$,
\item[] trivial $(S_{n}=0\text{ a.s~for all }n\ge 0)$,
\end{description}
the process $(Z_{n})_{n\ge 0}$ is called subcritical, supercritical, critical or strongly critical, respectively \cite[Def.~2.3]{KerstingVatutin:17}. If $\Erw\log A$ exists, this means that
\begin{align*}
(Z_{n})_{n\ge 0}\text{ is }
\begin{cases}
\hfill\text{subcritical}&\text{if }\Erw\log A>0,\\
\hfill\text{critical}&\text{if }\Erw\log A=0\text{ and }\Prob(A\ne 1)>0,\\
\text{strongly critical}&\text{if }A=1\text{ a.s.},\\
\hfill\text{supercritical}&\text{if }\Erw\log A<0.
\end{cases}
\end{align*}
Due to the very explicit knowledge of the $f_{1:n}$ in the present situation, a precise description of when each of these cases occurs is possible and in fact provided at the end of the next section after the collection of some relevant facts about iterations of random affine linear functions. Based on this, our definition of subcritical and supercritical processes will be slightly more restrictive as above and thus entail a wider definition of critical processes.

\section{Basic results for iterations of random affine linear functions}

Let us collect some essential facts about IFS generated by affine linear random functions $g_{n}(x)=A_{n}x+B_{n}$ with i.i.d.~positive random coefficients $A_{n},B_{n}$. As already stated, the \emph{forward iterations}
$$ g_{n:1}(x)\ =\ \Pi_{n}x\,+\,\sum_{k=1}^{n}\frac{\Pi_{n}}{\Pi_{k}}B_{k},\quad n\ge 0 $$
form a Markov chain with initial state $x$. The corresponding \emph{backward iterations}
$$ g_{1:n}(x)\ =\ \Pi_{n}(x)\,+\,\sum_{k=1}^{n}\Pi_{k-1}B_{k}\ =\ \Pi_{n}x\,+\,R_{n},\quad n\ge 0, $$
though having the same distribution $(g_{n:1}(x)\eqdist g_{1:n}(x))$, exhibit a very different behavior and are \emph{not} Markovian. They are in fact strictly increasing, for $A,B$ are positive. Notice that, if $g_{n:1}(x)=G_{n}(A_{1},B_{1},\ldots,$ $A_{n},B_{n})$ for a suitable function $G_{n}$, then $g_{1:n}(x)=G_{n}(A_{n},B_{n},\ldots,A_{1},B_{1})$. Goldie and Maller \cite[Thm.~2.1]{GolMal:00} have provided necessary and sufficient conditions for the stability (positive recurrence) of $(g_{n:1}(x))_{n\ge 0}$, here summarized in the subsequent proposition for the case of positive $A,B$.

\begin{Prop}\label{prop:GoldieMaller}
Suppose that $A,B$ are a.s. positive. Then the following assertions are equivalent:
\begin{description}[(b)]
\item[(a)] If $J^{-}(x):=\int_{0}^{x}\Prob(-\log A>y)\,dy=\Erw(x\wedge\log^{-}A)$ for $x>0$, then
\begin{equation}\label{eq:GolMal cond}
\Pi_{n}\,\to\,0\text{ a.s.}\quad\text{and}\quad
I_{-}\,:=\,\int_{[1,\infty)}\frac{\log x}{J^{-}(x)}\ \Prob(\log B\in dx)\ <\ \infty.
\end{equation}
\item[(b)] The so-called perpetuity
$$ R_{\infty}\ :=\ \sum_{k\ge 1}\Pi_{k-1}B_{k} $$
is a.s.~finite and, for each $x\in\R$, the backward iterations $g_{1:n}(x)$ converge a.s. (monotonically) to $R_{\infty}$, while the forward iterations $g_{n:1}(x)$ converge in distribution to $R_{\infty}$.
\end{description}
Conversely, if
\begin{equation}\label{eq:nondegeneracy}
\Prob(Ax+B=x)\,<\,1\quad\text{for all }x\in\R
\end{equation}
and at least one of the conditions in \eqref{eq:GolMal cond} fails to hold, then $R_{\infty}=\infty$ a.s.
\end{Prop}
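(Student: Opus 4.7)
The plan is to exploit that, because $A,B>0$ a.s., the sequence $R_{n}=\sum_{k=1}^{n}\Pi_{k-1}B_{k}$ is nondecreasing, so $R_{\infty}\in[0,\infty]$ always exists as a monotone a.s.\ limit. From $g_{1:n}(x)=\Pi_{n}x+R_{n}$, once we know $\Pi_{n}\to 0$ a.s., the backward iterations $g_{1:n}(x)$ converge a.s.\ to $R_{\infty}$ for every $x\in\R$, and the distributional convergence $g_{n:1}(x)\to R_{\infty}$ then follows for free from the identity $g_{n:1}(x)\eqdist g_{1:n}(x)$ noted just before the proposition. Thus the real content of the equivalence reduces to establishing
\[
\Pi_{n}\to 0\text{ a.s.\ and }I_{-}<\infty\quad\Longleftrightarrow\quad R_{\infty}<\infty\text{ a.s.}
\]

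For $\mathrm{(a)}\Rightarrow\mathrm{(b)}$, I would recast everything via the random walk $S_{n}:=-\log\Pi_{n}$, which under $\Pi_{n}\to 0$ a.s.\ drifts to $+\infty$. Writing $\Pi_{k-1}B_{k}=\exp(-S_{k-1}+\log B_{k})$, the contribution of the terms with $\log B_{k}\le S_{k-1}/2$ is summable by a direct geometric bound, so the heart of the matter is to control $\sum_{k\ge 1}e^{-S_{k-1}+\log B_{k}}\,\mathbbm{1}_{\{\log B_{k}>S_{k-1}/2\}}$. Here the classical Goldie--Maller argument invokes an Erickson-type renewal bound: the expected occupation time of the renewal measure of $(S_{n})$ near level $x$ is of order $1/J^{-}(x)$, and integration against $\Prob(\log B\in dx)$ reproduces exactly the integral $I_{-}$ as the sharp critical threshold for a.s.\ finiteness of $R_{\infty}$. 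The reverse direction $\mathrm{(b)}\Rightarrow\mathrm{(a)}$ comes from reversing the same chain of estimates: summability of $R_{\infty}$ forces $\Pi_{k-1}B_{k}\to 0$ a.s., which together with $\Prob(B\ge\eps)>0$ for some $\eps>0$ and the random-walk trichotomy rules out $S_{n}\to-\infty$ and $S_{n}$ oscillating (in each case, $\Pi_{k-1}B_{k}$ remains bounded away from $0$ along the i.i.d.\ Bernoulli subsample $\{k:B_{k}\ge\eps\}$, contradicting summability), so $\Pi_{n}\to 0$ a.s.; inverting the Erickson bound then extracts $I_{-}<\infty$.

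For the final converse statement, the stochastic fixed-point identity $R_{\infty}=B_{1}+A_{1}R_{\infty}'$ with $R_{\infty}'\eqdist R_{\infty}$ independent of $(A_{1},B_{1})$, together with $A_{1},B_{1}>0$ a.s., gives $\{R_{\infty}<\infty\}=\{R_{\infty}'<\infty\}$ up to null sets; iterating this places the event in the tail $\sigma$-field of $(\bfe_{n})_{n\ge 1}$, so Kolmogorov's 0--1 law yields $\Prob(R_{\infty}<\infty)\in\{0,1\}$, with the non-degeneracy assumption \eqref{eq:nondegeneracy} used only to exclude the trivial deterministic fixed-point case. Combining this dichotomy with $\mathrm{(a)}\Leftrightarrow\mathrm{(b)}$, failure of either condition in \eqref{eq:GolMal cond} forces $R_{\infty}=\infty$ a.s.

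The main obstacle is the sharp renewal estimate identifying $I_{-}$ as the precise critical integral: the integrand $\log x/J^{-}(x)$ is calibrated exactly so that the expected time spent by $(S_{n})$ near $x$ is matched against $\Prob(\log B>x)$; without this delicate balance one only obtains cruder moment-type conditions such as $\Erw\log^{+}B<\infty$, which are in general not necessary when $\Erw\log^{-}A=\infty$. All remaining steps are fairly routine manipulations with random walks, tail $\sigma$-fields, and monotone convergence.
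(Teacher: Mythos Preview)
The paper does not give its own proof of this proposition: it is stated explicitly as a summary of \cite[Thm.~2.1]{GolMal:00} (Goldie and Maller), and no argument is supplied beyond the citation. So there is nothing in the paper to compare your proof against on a step-by-step level.

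That said, your sketch is a fair outline of the actual Goldie--Maller argument. The reduction of (b) to ``$\Pi_{n}\to 0$ a.s.\ and $R_{\infty}<\infty$ a.s.'' via $g_{1:n}(x)=\Pi_{n}x+R_{n}$ and $g_{n:1}(x)\eqdist g_{1:n}(x)$ is exactly right, as is the observation that the crux is the sharp Erickson-type renewal estimate matching $\Prob(\log B>x)$ against the expected occupation density $1/J^{-}(x)$ of the divergent walk $S_{n}=-\log\Pi_{n}$. Your 0--1 argument for the final converse is also the standard one. The one place where your sketch is thin is the direction $R_{\infty}<\infty\Rightarrow I_{-}<\infty$: saying ``inverting the Erickson bound then extracts $I_{-}<\infty$'' hides real work, since one needs a lower bound on the renewal occupation that is tight up to constants, not just an upper bound; in Goldie--Maller this is where Erickson's two-sided estimates (or a direct second-moment/Borel--Cantelli argument on the events $\{\log B_{k}>S_{k-1}\}$) are invoked. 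If you were to flesh this out, that is the step requiring the most care.
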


Plainly, the law of $R_{\infty}$, if a.s. finite, forms the unique stationary distribution of the Markov chain $(g_{n:1}(x))_{n\ge 0}$. Let us further point out that for $(A,B)$ with $A+B\ge 1$ a.s., which is a necessary requirement in the above branching framework, we further have
\begin{equation}\label{eq:Sinfty>=1}
R_{\infty}\,\ge\, 1\quad\text{a.s.}
\end{equation}
Namely, if $R_{\infty}<\infty$ a.s. and thus condition \eqref{eq:GolMal cond} is valid, then $B\ge 1-A$ a.s. implies
\begin{align*}
R_{\infty}\ &\ge\ \lim_{n\to\infty}\sum_{k=1}^{n}\Pi_{k-1}(1-A_{k})\\
&=\ \lim_{n\to\infty}\sum_{k=1}^{n}(\Pi_{k-1}-\Pi_{k})\ =\ 1-\lim_{n\to\infty}\Pi_{n}\ =\ 1.
\end{align*}
This actually even shows that $R_{\infty}>1$ a.s. unless $A+B=1$ a.s.

\vspace{.2cm}
As a direct consequence of the previous proposition, we can state the following duality result for the case when $\Pi_{n}\to\infty$ a.s.

\begin{Prop}\label{prop:duality lemma}
Defining $g_{n}^{(-1)}(x):=A_{n}^{-1}x+A_{n}^{-1}B_{n}$ for $n\in\N$ (which is not the inverse of $g_{n}$), the duality relation
\begin{equation}
\frac{R_{n}}{\Pi_{n}}\ =\ \frac{g_{1:n}(0)}{\Pi_{n}}\ =\ g_{n:1}^{(-1)}(0)\ \eqdist\ g_{1:n}^{(-1)}(0)\ =\ \sum_{k=1}^{n}\Pi_{k}^{-1}B_{k}\ =:\ R_{n}^{(-1)}
\end{equation}
holds for all $n\in\N$. Moreover, if
\begin{equation}\label{eq:GolMal cond dual}
\Pi_{n}\,\to\,\infty\text{ a.s.}\quad\text{and}\quad
I_{+}\,:=\,\int_{[1,\infty)}\frac{\log x}{J^{+}(x)}\ \Prob(\log(B/A)\in dx)\ <\ \infty,
\end{equation}
where $J^{+}(x):=\Erw(x\wedge\log^{+}A)$, then $R_{\infty}^{(-1)}:=\sum_{k\ge 1}\Pi_{k}^{-1}B_{k}<\infty$ a.s. and
\begin{equation*}
\frac{R_{n}}{\Pi_{n}}\ \idist\ R_{\infty}^{(-1)}.
\end{equation*}
On the other hand, if \eqref{eq:nondegeneracy} is valid and at least one of the conditions in \eqref{eq:GolMal cond dual} fails to hold, then $R_{\infty}^{(-1)}=\infty$ a.s.
\end{Prop}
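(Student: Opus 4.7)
The proposition has three pieces --- the algebraic duality identity, the finiteness and weak convergence under \eqref{eq:GolMal cond dual}, and the converse --- and my strategy is to reduce all of them to applications of Proposition~\ref{prop:GoldieMaller} to the \emph{dual} IFS generated by the affine maps $g_n^{(-1)}(x)=A_n^{-1}x+A_n^{-1}B_n$, i.e.\ to the coefficient pair $(A',B'):=(A^{-1},A^{-1}B)$ in place of $(A,B)$. For the algebraic identity I would simply telescope both compositions: expanding $g_{n:1}^{(-1)}(0)$ and $g_{1:n}^{(-1)}(0)$ yields respectively
\[ g_{n:1}^{(-1)}(0)\ =\ \sum_{k=1}^{n}\frac{\Pi_{k-1}}{\Pi_{n}}B_{k}\ =\ \frac{R_{n}}{\Pi_{n}}\quad\text{and}\quad g_{1:n}^{(-1)}(0)\ =\ \sum_{k=1}^{n}\Pi_{k}^{-1}B_{k}\ =\ R_{n}^{(-1)}. \]
The distributional equality of these two quantities then follows from the coordinate-reversal observation already used in the text for $g_{n:1}$ versus $g_{1:n}$ (now applied to $g^{(-1)}$), together with $\bfe_{1:n}\eqdist\bfe_{n:1}$.

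For the positive half, the hypotheses of Proposition~\ref{prop:GoldieMaller} translate cleanly under $(A,B)\mapsto(A',B')$: the product condition $\prod_{k=1}^{n}A_{k}'\to 0$ a.s.\ is the same as $\Pi_{n}\to\infty$ a.s.; the tail integral for $A'$ satisfies $\Erw(x\wedge\log^{-}A^{-1})=\Erw(x\wedge\log^{+}A)=J^{+}(x)$; and since $\log B'=\log(B/A)$, the Goldie--Maller integrability condition for $(A',B')$ coincides with $I_{+}$ appearing in \eqref{eq:GolMal cond dual}. Applying part~(b) of Proposition~\ref{prop:GoldieMaller} to the dual IFS therefore yields $R_{\infty}^{(-1)}<\infty$ a.s.\ together with a.s.\ (monotone) convergence $g_{1:n}^{(-1)}(0)\to R_{\infty}^{(-1)}$; by the distributional identity from Step~1 this upgrades to $R_{n}/\Pi_{n}\idist R_{\infty}^{(-1)}$. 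For the converse I have to verify that the nondegeneracy condition \eqref{eq:nondegeneracy} is self-dual: the equation $A^{-1}x+A^{-1}B=x$ rearranges to $B=(A-1)x$, and the substitution $y=-x$ brings this back to $B=(1-A)y$, so the two versions of \eqref{eq:nondegeneracy} are equivalent. The converse half of Proposition~\ref{prop:GoldieMaller} applied to $(A',B')$ then delivers $R_{\infty}^{(-1)}=\infty$ a.s.\ whenever one of the conditions in \eqref{eq:GolMal cond dual} fails.

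There is no substantive obstruction in this argument; the whole proof is a change of variables. The one point deserving care is the dictionary between Proposition~\ref{prop:GoldieMaller} applied to $(A,B)$ and to $(A',B')$ --- signs of logarithms flip, $J^{-}$ turns into $J^{+}$, and the driving variable becomes $\log(B/A)$ rather than $\log B$ --- but once this translation is set up, both halves of Proposition~\ref{prop:duality lemma} follow immediately from Proposition~\ref{prop:GoldieMaller}.
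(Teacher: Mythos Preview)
Your proposal is correct and matches exactly what the paper does: it states Proposition~\ref{prop:duality lemma} as ``a direct consequence of the previous proposition'' (i.e.\ Proposition~\ref{prop:GoldieMaller}) without giving further details, and your dictionary $(A,B)\mapsto(A^{-1},A^{-1}B)$ together with the self-duality check for \eqref{eq:nondegeneracy} is precisely the translation that makes this a direct consequence.
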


Our standing assumption \eqref{eq:parameter settings} ensures that $R_{\infty}$ and $R_{\infty}^{(-1)}$ \emph{always} exist as the strictly increasing limits of $R_{n}$ and $R_{n}^{(-1)}$, respectively. One can also easily verify that these random variables cannot be a.s. finite at the same time. Therefore, the trichotomy
\begin{description}[(C3)]\itemsep=2pt
\item[(C1)] $R_{\infty}<\infty=R_{\infty}^{(-1)}$ a.s.
\item[(C2)] $R_{\infty}^{(-1)}<\infty=R_{\infty}$ a.s.
\item[(C3)] $R_{\infty}=R_{\infty}^{(-1)}=\infty$ a.s.
\end{description}
holds, and with the help of the previous two propositions characterization of the three cases in terms of $(A,B)$ is easily provided and leads to the announced classification of the four criticality regimes of $(Z_{n})_{n\ge 0}$ that differs slightly from the one based only on the fluctuation type of the random walk $(S_{n})_{n\ge 0}$ used in \cite{KerstingVatutin:17}.

\begin{Prop}\label{prop:trichotomy}
Assuming $A,B>0$, we have that\\[1.5mm]
(C1) ocurs iff one of the following two sets of  conditions holds:
\begin{description}[~(C3.6)~]\itemsep3pt
\item[~(C1.1)] Cond.~\eqref{eq:nondegeneracy}, $\Pi_{n}\to 0$ a.s. and $I_{-}<\infty$.
\item[~(C1.2)] $B=x(1-A)$ a.s. for some $x>0$ $(\RA\Pi_{n}\to 0$ a.s.$)$.
\end{description}
(C2) occurs iff one of the following two sets of conditions holds:
\begin{description}[~(C3.6)~]\itemsep3pt
\item[~(C2.1)] Cond.~\eqref{eq:nondegeneracy}, $\Pi_{n}\to\infty$ a.s. and $I_{+}<\infty$.
\item[~(C2.2)] $B=x(A-1)$ a.s. for some $x>0$ $(\RA\Pi_{n}\to\infty$ a.s.$)$.
\end{description}
(C3) occurs iff one of the following four sets of conditions holds:
\begin{description}[~(C3.6)~]\itemsep3pt
\item[~(C3.1)] Cond.~\eqref{eq:nondegeneracy}, $\Pi_{n}\to 0$ a.s. and $I_{-}=\infty$.
\item[~(C3.2)] Cond.~\eqref{eq:nondegeneracy}, $\Pi_{n}\to\infty$ a.s. and $I_{+}=\infty$.
\item[~(C3.3)] $\liminf_{n\to\infty}\Pi_{n}=0$ and $\limsup_{n\to\infty}\Pi_{n}=\infty$ a.s.
\item[~(C3.4)] $A=1$ and thus $\Pi_{n}=1$ for all $n\in\N_{0}$ a.s.
\end{description}
\end{Prop}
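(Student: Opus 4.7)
The plan is to combine Propositions \ref{prop:GoldieMaller} and \ref{prop:duality lemma} with a case analysis on the fluctuation type of the random walk $(\log\Pi_{n})_{n\ge 0}$ and on whether the nondegeneracy \eqref{eq:nondegeneracy} holds.

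First I would dispose of the degenerate cases. Since $A,B>0$ a.s., failure of \eqref{eq:nondegeneracy} amounts to the existence of some $x\ne 0$ with $B=x(1-A)$ a.s.; positivity of $B$ then forces $A<1$ a.s.~when $x>0$ (which is (C1.2)) and $A>1$ a.s.~when $x<0$ (rewriting as $B=y(A-1)$ with $y=-x>0$, which is (C2.2)). In each case the perpetuities telescope: under (C1.2),
\begin{equation*}
R_{n}\ =\ x\sum_{k=1}^{n}\Pi_{k-1}(1-A_{k})\ =\ x(1-\Pi_{n})\,\to\,x\quad\text{a.s.,}
\end{equation*}
while a parallel computation gives $R_{n}^{(-1)}=x(\Pi_{n}^{-1}-1)\to\infty$ a.s.; the roles swap under (C2.2). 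This directly places (C1.2) into (C1) and (C2.2) into (C2).

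Assuming \eqref{eq:nondegeneracy} henceforth, I would stratify by the behavior of $\Pi_{n}$, which falls into exactly one of four categories: (i) $\Pi_{n}\to 0$ a.s., (ii) $\Pi_{n}\to\infty$ a.s., (iii) $\liminf_{n}\Pi_{n}=0$ and $\limsup_{n}\Pi_{n}=\infty$ a.s., (iv) $A=1$ a.s.~(so $\Pi_{n}\equiv 1$). In (i), Proposition \ref{prop:GoldieMaller} and its converse yield $R_{\infty}<\infty$ a.s.~iff $I_{-}<\infty$, while the converse of Proposition \ref{prop:duality lemma} (applicable because $\Pi_{n}\not\to\infty$) gives $R_{\infty}^{(-1)}=\infty$ a.s.; hence (i) partitions as (C1.1)$\Rightarrow$(C1) and (C3.1)$\Rightarrow$(C3). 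Case (ii) is dual, producing (C2.1)$\Rightarrow$(C2) and (C3.2)$\Rightarrow$(C3). In the oscillation case (iii), both converses apply to give $R_{\infty}=R_{\infty}^{(-1)}=\infty$ a.s., which is (C3.3). Finally (iv) gives $R_{\infty}=R_{\infty}^{(-1)}=\sum_{k\ge 1}B_{k}=\infty$ a.s.~because $B>0$, which is (C3.4).

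Since the listed subcases are exhaustive and pairwise disjoint, each of (C1), (C2), (C3) is characterised precisely by the stated conditions. The main delicacy is only bookkeeping: confirming that failure of \eqref{eq:nondegeneracy} with $A,B>0$ a.s.~admits exactly the two subcases (C1.2) and (C2.2), and that $A=1$ a.s.~is consistent with \eqref{eq:nondegeneracy} (which it is, because $B>0$ precludes $B=x(1-A)=0$). All substantial analytic content is already carried by Propositions \ref{prop:GoldieMaller} and \ref{prop:duality lemma}.
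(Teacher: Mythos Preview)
Your proposal is correct and follows essentially the same route as the paper: first handle the degenerate cases $B=x(1-A)$ via the telescoping identities $R_{n}=x(1-\Pi_{n})$ and $R_{n}^{(-1)}=x(\Pi_{n}^{-1}-1)$, then under \eqref{eq:nondegeneracy} invoke Propositions \ref{prop:GoldieMaller} and \ref{prop:duality lemma} to classify (C1) and (C2), leaving (C3) to the residual cases. Your version is slightly more explicit in stratifying by the four fluctuation types of $(\Pi_{n})_{n\ge 0}$ and in noting that $A=1$ a.s.\ is compatible with \eqref{eq:nondegeneracy}, but there is no substantive difference in method.
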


Justified by the results that will be presented in the next three sections, the process $(Z_{n})_{n\ge 0}$ is called
\begin{description}\itemsep2pt
\item[] supercritical under (C1);
\item[] subcritical under (C2);
\item[] critical under any of (C3.1), (C3.2), or (C3.3);
\item[] strongly critical under (C3.4).
\end{description}

\begin{proof}
First we consider the situation when $Ax+B=x$ a.s.~for some $x\in\R$ which must be nonzero because otherwise $B=0$ a.s.~would follow. If $x>0$, then $B=x(1-A)>0$ a.s.~entails $0<A<1$ a.s. and thus $\Pi_{n}\to 0$ a.s. Now
\begin{align*}
&R_{\infty}\ =\ \lim_{n\to\infty}R_{n}\ =\ \lim_{n\to\infty}x\sum_{k=1}^{n}\Pi_{k-1}(1-A_{k})\ =\ \lim_{n\to\infty}x(1-\Pi_{n})\ =\ x
\shortintertext{together with}
&\hspace{.8cm}R_{\infty}^{(-1)}\ =\ \lim_{n\to\infty}x\sum_{k=1}^{n}\Pi_{k}^{-1}(1-A_{k})\ =\ \lim_{n\to\infty}x(\Pi_{n}^{-1}-1)\ =\ \infty
\end{align*}
shows that (C1) holds true. If $Ax+B=x$ a.s. for some $x<0$, then (C2) with
\begin{align*}
R_{\infty}^{(-1)}\ =\ \lim_{n\to\infty}x(\Pi_{n}^{-1}-1)\ =\ |x|
\end{align*}
follows in the same manner.

\vspace{.2cm}
For the remainder of the proof suppose that \eqref{eq:nondegeneracy} is valid. Then Prop. \ref{prop:GoldieMaller} ensures that (C1) holds iff $\Pi_{n}\to 0$ a.s. and $I_{-}<\infty$ (see \eqref{eq:GolMal cond}), while  Prop. \ref{prop:duality lemma} shows equivalence of (C2) with $\Pi_{n}\to\infty$ and $I_{+}<\infty$ (see \eqref{eq:GolMal cond dual}). As a consequence, (C3) must be valid in any of the remaining cases, stated as (C3.1)--(C3.4).\qed
\end{proof}

\begin{Rem}\label{rem:A+B>=1}\rm
Since $A+B\ge 1$ in the branching framework, as pointed out earlier, $B=x(1-A)$ a.s. for some $x>0$ can actually only occur if $x\ge 1$.
\end{Rem}

\begin{Rem}\label{rem:A=1}\rm
Note that $A=1$ a.s. entails $R_{n}=R_{n}^{(-1)}=\sum_{k=1}^{n}B_{k}$ for all $n\ge 1$. In other words, $(R_{n})_{n\ge 0}$ and $(R_{n}^{(-1)})_{n\ge 0}$ coincide and constitute an ordinary random walk with generic increment $B$. It is nontrivial by our standing assumption $\Prob(B>0)=1$, see \eqref{eq:parameter settings}.
\end{Rem}

\section{The subcritical case}

Let $(Z_{n})_{n\ge 0}$ be subcritical, thus $R_{\infty}^{(-1)}<\infty=R_{\infty}$ and $\Pi_{n}\to\infty$ a.s. In order to determine the quasistationary behavior of $Z_{n}$ given $Z_{n}>0$ as $n\to\infty$, put $$ h_{n}(s)\ :=\ \bfE^{(1:n)}\left(s^{Z_{n}}|Z_{n}>0\right) $$
for $n\in\N$. Then
\begin{align*}
h_{n}(s)\ =\ \frac{f_{1:n}(s)-f_{1:n}(0)}{1-f_{1:n}(0)}\ =\ 1\,-\,\frac{1-f_{1:n}(s)}{1-f_{1:n}(0)}
\end{align*}
and therefore
\begin{align*}
\frac{1}{1-h_{n}(s)}\ &=\ \frac{1-f_{1:n}(0)}{1-f_{1:n}(s)}\ =\ \frac{\Pi_{n}(1-s)^{-1}+R_{n}}{\Pi_{n}+R_{n}}\\
&=\ \frac{\Pi_{n}}{\Pi_{n}+R_{n}}\cdot\frac{1}{1-s}\,+\,\frac{R_{n}}{\Pi_{n}+R_{n}}\\
&=\ \frac{1}{1+R_{n}/\Pi_{n}}\cdot\frac{1}{1-s}\,+\,\frac{R_{n}/\Pi_{n}}{1+R_{n}/\Pi_{n}}
\end{align*}
for each $n\in\N$. In other words, 
\begin{equation}\label{eq:dist Zn given Zn>0 fw}
\bfP^{(1:n)}(Z_{n}\in\cdot|Z_{n}>0)\ =\ \Geom_{+}\left(\frac{1}{1+R_{n}/\Pi_{n}}\right)
\end{equation}
is a.s.~geometric on $\N$ and thus again linear fractional. However, it fluctuates in accordance with $R_{n}/\Pi_{n}$ which converges only in distribution. The same observation is made for the pertinent quenched survival probability (see also \eqref{eq:survival probab quenched})
\begin{align}\label{eq:survival probability explicit}
\Pi_{n}\,\bfP^{(1:n)}(Z_{n}>0)\ =\ \frac{1}{\bfE^{(1:n)}(Z_{n}|Z_{n}>0)}\ =\ \frac{1}{1+R_{n}/\Pi_{n}}\quad\text{a.s.}
\end{align}
As already indicated, the situation improves under reversal of the environment at each $n$ because this means to replace $R_{n}/\Pi_{n}$ by its a.s.~convergent counterpart $R_{n}^{(-1)}$. We have
\begin{gather}\label{eq:dist Zn given Zn>0 bw}
\bfP^{(n:1)}(Z_{n}\in\cdot|Z_{n}>0)\ =\ \Geom_{+}\left(\frac{1}{1+R_{n}^{(-1)}}\right)
\intertext{and accordingly}
\Pi_{n}\,\bfP^{(n:1)}(Z_{n}>0)\ =\ \frac{1}{\bfE^{(n:1)}(Z_{n}|Z_{n}>0)}\ =\ \frac{1}{1+R_{n}^{(-1)}}\quad\text{a.s.}\label{eq:survival probability explicit bw}
\end{gather}
Using Prop. \ref{prop:duality lemma}, the following quenched convergence result under (C2) is almost immediate.

\begin{Theorem}\label{thm:quenched limit}
Let $(Z_{n})_{n\ge 0}$ be subcritical and $R_{\infty}^{(-1)}<\infty=R_{\infty}$ a.s. Then 
\begin{gather}
\Pi_{n}\,\bfP^{(n:1)}(Z_{n}>0)\ =\ \frac{1}{\bfE^{(n:1)}(Z_{n}|Z_{n}>0)}\ \xrightarrow{n\to\infty}\ \frac{1}{1+R_{\infty}^{(-1)}}\quad\text{a.s.}.\label{eq:survival probability q}
\intertext{and $\bfP^{(n:1)}(Z_{n}\in\cdot|Z_{n}>0)$ converges a.s.~to $\Geom_{+}(1/(1+R_{\infty}^{(-1)}))$ in total variation, that is}
\left\|\bfP^{(n:1)}(Z_{n}\in\cdot|Z_{n}>0)-\Geom_{+}\left(\frac{1}{1+R_{\infty}^{(-1)}}\right)\right\|\ \xrightarrow{n\to\infty}\ 0\quad\text{a.s.}\label{eq:Yaglom q}
\end{gather}
\end{Theorem}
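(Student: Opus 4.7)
My plan is to read both assertions off the explicit quenched identities \eqref{eq:survival probability explicit bw} and \eqref{eq:dist Zn given Zn>0 bw} derived just before the theorem, combined with the a.s.\ monotone convergence $R_n^{(-1)}\nearrow R_\infty^{(-1)}<\infty$ that is guaranteed by Proposition \ref{prop:duality lemma} under assumption (C2). Since $R_n^{(-1)}=\sum_{k=1}^n\Pi_k^{-1}B_k$ is a monotonically increasing sum of a.s.\ positive random variables and the hypothesis $R_\infty^{(-1)}<\infty$ a.s.\ holds, this convergence is in fact automatic.

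For the first assertion \eqref{eq:survival probability q}, I would simply rewrite the left-hand side via \eqref{eq:survival probability explicit bw} as $(1+R_n^{(-1)})^{-1}$ a.s.\ and apply continuity of $x\mapsto(1+x)^{-1}$ on $[0,\infty)$ to pass to the limit on the exceptional set of probability zero where either the identity fails or $R_n^{(-1)}\not\to R_\infty^{(-1)}$.

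For the Yaglom-type statement \eqref{eq:Yaglom q}, I would invoke \eqref{eq:dist Zn given Zn>0 bw}, which identifies $\bfP^{(n:1)}(Z_n\in\cdot\mid Z_n>0)$ with $\Geom_{+}(p_n)$, where $p_n:=(1+R_n^{(-1)})^{-1}$. From the first step, $p_n\to p_\infty:=(1+R_\infty^{(-1)})^{-1}\in(0,1]$ a.s., and since the geometric mass function $p\mapsto p(1-p)^{k-1}$ is continuous in $p$ for every fixed $k\in\N$, the probability mass functions of $\Geom_{+}(p_n)$ converge pointwise to those of $\Geom_{+}(p_\infty)$ on a single event of full measure. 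A routine appeal to Scheff\'e's lemma then upgrades this pointwise convergence to convergence in total variation on the same event.

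The theorem is thus essentially an immediate corollary of the explicit formulas \eqref{eq:survival probability explicit bw}--\eqref{eq:dist Zn given Zn>0 bw} together with Proposition \ref{prop:duality lemma}. The only point requiring a standard but nontrivial additional input is the last step, where one must convert pointwise convergence of parameterized geometric pmf's into total variation convergence; since Scheff\'e's lemma handles this verbatim, I do not anticipate any genuine obstacle in the argument.
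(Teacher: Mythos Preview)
Your proposal is correct and follows exactly the same route as the paper: the paper's proof simply cites \eqref{eq:dist Zn given Zn>0 bw}, \eqref{eq:survival probability explicit bw} and the a.s.\ convergence $R_{n}^{(-1)}\to R_{\infty}^{(-1)}$, and declares both assertions to follow directly. Your version merely spells out, via Scheff\'e's lemma, the passage from pointwise pmf convergence to total variation convergence that the paper leaves implicit.
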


\begin{proof}
As $R_{n}^{(-1)}\to R_{\infty}^{(-1)}$ a.s., the assertions follow directly from \eqref{eq:dist Zn given Zn>0 bw} and \eqref{eq:survival probability explicit bw}.\qed
\end{proof}

Our second quenched result takes a look at the size of the population at the eve of extinction and must therefore condition on $\{Z_{n}>0,\,Z_{n+l}=0\}$ with any fixed $l\in\N$. Put 
$$ h_{n, l}(s)\ :=\ \bfE^{(1:n+l)}\left(s^{Z_{n}}|Z_{n} > 0, Z_{n+l} = 0\right) $$
and recall from \eqref{eq:survival probab quenched} that $q_{n}(\bfe):=f_{1:n}(0)=1-(R_{n}+\Pi_{n})^{-1}$.
Then
\begin{align*}
h_{n, l}(s)\ &=\ \frac{\bfE^{(1:n+l)}\left(s^{Z_{n}}\1_{\{Z_{n} > 0,Z_{n+l} = 0\}}\right)}{\bfP^{(1:n+l)}(Z_{n} > 0,Z_{n+l} = 0)}\\
&=\ \frac{\bfE^{(1:n+l)}\left((sf_{n+1:n+l}(0))^{Z_{n}}\1_{\{Z_{n}>0\}}\right)}{\bfP^{(1:n+l)}(Z_{n} > 0,Z_{n+l} = 0|\bfe)}\\
&=\ \frac{\bfE^{(1:n+l)}\left((sf_{n+1:n+l}(0))^{Z_{n}}\1_{\{Z_{n}>0\}}\right)}{\bfE^{(1:n+l)}\left((f_{n+1:n+l}(0))^{Z_{n}}\1_{\{Z_{n}>0\}}\right)}\\
&=\ \frac{\bfE^{(1:n+l)}\left((sf_{n+1:n+l}(0))^{Z_{n}}|Z_{n}>0\right)}{\bfE^{(1:n+l)}\left((f_{n+1:n+l}(0))^{Z_{n}}|Z_{n}>0\right)}\\
&=\ \frac{h_{n}(f_{n+1:n+l}(0)s)}{h_{n}(f_{n+1:n+l}(0))}\ =\ \frac{h_{n}(q_{l}(\bfe_{n+1:n+l})s)}{h_{n}(q_{l}(\bfe_{n+1:n+l}))},
\end{align*}
and since $h_{n}$ is the g.f.~of the geometric law $\Geom_{+}(\Pi_{n}(1-q_{n}(\bfe_{1:n})))$ (cf.\,\eqref{eq:dist Zn given Zn>0 fw}), we conclude by recalling \eqref{eq:geom law transformation} that $\bfP^{(1:n+l)}(Z_{n}|Z_{n}>0, Z_{n+l}=0)$ is a.s.~geometric as well, the parameter being
\begin{align*}
\vth_{n,l}(\bfe_{1:n+l})\,:=\,1-q_{l}(\bfe_{n+1:n+l})\big(1-\Pi_{n}(1-q_{n}(\bfe_{1:n})\big).
\end{align*}
So this law fluctuates in accordance with $\vth_{n,l}(\bfe_{1:n+l})$ which again converges only in law. Reversal of the environment provides the same result with random parameter
\begin{gather*}
\vth_{n,l}(\bfe_{n+l:1})\ =\ 1-q_{l}(\bfe_{l:1})\left(1-\frac{\Pi_{n+l}}{\Pi_{l}}(1-q_{n}(\bfe_{n+l:l+1})\right)\\
=\ \frac{1}{\Pi_{l}(1+R_{l}^{(-1)})}\,+\,\left(1-\frac{1}{\Pi_{l}(1+R_{l}^{(-1)})}\right)\frac{1}{1+R_{l,n}^{(-1)}}
\end{gather*}
which has obviously the a.s.~limit
$$ \vth_{l}\ :=\ \frac{1}{\Pi_{l}(1+R_{l}^{(-1)})}\,+\,\left(1-\frac{1}{\Pi_{l}(1+R_{l}^{(-1)})}\right)\frac{1}{1+R_{l,\infty}^{(-1)}} $$
as $n\to\infty$, where
$$ R_{l,n}^{(-1)}\ :=\ \sum_{k=1}^{n}\frac{\Pi_{l}}{\Pi_{k+l}}B_{k+l}\ =\ \Pi_{l}\left(R_{n+l}^{(-1)}-R_{l}^{(-1)}\right) $$
for $n\in\N\cup\{\infty\}$ and $R_{l,n}^{(-1)}\eqdist R_{n}^{(-1)}$ for all $l$ and $n$ should be noted. The first two assertions of the subsequent theorem are now immediate.

\begin{Theorem}\label{thm:extinction next stage q}
Let $(Z_{n})_{n\ge 0}$ be subcritical and $l\in\N$. Then
\begin{gather}\label{eq:extinction next stage q}
\left\|\bfP^{(n+l:1)}(Z_{n}\in \cdot|Z_{n}>0,Z_{n+l}=0)-\Geom_{+}(\vth_{l})\right\|\ \xrightarrow{n\to\infty}\ 0\quad\text{a.s.},\\
\bfE^{(n+l:1)}(Z_{n}|Z_{n}>0,Z_{n+l}=0)\ \xrightarrow{n\to\infty}\ \vth_{l}^{-1}\quad\text{a.s.}\label{eq:mean next stage q}
\shortintertext{and}
\begin{split}\label{eq:survival probability next stage q}
&\frac{\Pi_{n+l}}{\Pi_{l}}\,\bfP^{(n+l:1)}(Z_{n}>0,Z_{n+l}=0)\\
&\hspace{2cm}\xrightarrow{n\to\infty}\ \frac{1-(\Pi_{l}+R_{l})^{-1}}{1+R_{l,\infty}^{(-1)}(\Pi_{l}+R_{l})^{-1}}\cdot\frac{1}{1+R_{l,\infty}^{(-1)}}\quad\text{a.s.}
\end{split}
\end{gather}
\end{Theorem}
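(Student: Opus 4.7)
The first two assertions are immediate from the discussion preceding the theorem: the identity
$\bfP^{(n+l:1)}(Z_n\in\cdot\,|\,Z_n>0,Z_{n+l}=0) = \Geom_+(\vth_{n,l}(\bfe_{n+l:1}))$ combined with $\vth_{n,l}(\bfe_{n+l:1})\to\vth_l\in(0,1)$ a.s.\ (a consequence of $R_{l,n}^{(-1)}\to R_{l,\infty}^{(-1)}$ a.s., which holds by Proposition~\ref{prop:duality lemma} under subcriticality) gives \eqref{eq:mean next stage q} by continuity of $p\mapsto 1/p$, and yields \eqref{eq:extinction next stage q} by Scheff\'e's lemma applied to the pointwise convergence of the geometric mass functions $p(1-p)^{k-1}$, $k\in\N$.

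For \eqref{eq:survival probability next stage q} I would factorize
$$ \bfP^{(n+l:1)}(Z_n>0,Z_{n+l}=0)\ =\ \bfP^{(n+l:1)}(Z_n>0)\cdot\bfP^{(n+l:1)}(Z_{n+l}=0\,|\,Z_n>0) $$
and treat each factor with the linear fractional structure. Under $\bfe_{n+l:1}$, the first $n$ generations are driven by the sub-block $\bfe_{n+l:l+1}$ while the final $l$ generations are driven by the disjoint sub-block $\bfe_{l:1}$; by i.i.d.\ these two sub-blocks are independent. Applying \eqref{eq:survival probability explicit bw} to the $n$-step process in the sub-environment $\bfe_{n+l:l+1}$ gives
$$ \bfP^{(n+l:1)}(Z_n>0)\ =\ \frac{\Pi_l}{\Pi_{n+l}(1+R_{l,n}^{(-1)})}, $$
and \eqref{eq:dist Zn given Zn>0 bw} identifies the quenched law of $Z_n$ given $\{Z_n>0\}$ as $\Geom_+(p_n)$ with $p_n:=1/(1+R_{l,n}^{(-1)})$. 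The Markov property in a fixed environment then yields
$$ \bfP^{(n+l:1)}(Z_{n+l}=0\,|\,Z_n=k)\ =\ q_l(\bfe_{l:1})^k\ =\ \al^k,\qquad \al:=1-[\Pi_l(1+R_l^{(-1)})]^{-1}, $$
because the $k$ lines alive at time $n$ evolve independently over the remaining $l$ generations under $\bfe_{l:1}$. Summing the geometric series in $k$ gives $\bfP^{(n+l:1)}(Z_{n+l}=0\,|\,Z_n>0) = \al p_n/(1-\al(1-p_n))$, so that
$$ \frac{\Pi_{n+l}}{\Pi_l}\,\bfP^{(n+l:1)}(Z_n>0,Z_{n+l}=0)\ =\ \frac{\al\,p_n^2}{1-\al(1-p_n)}. $$
The a.s.\ convergence $p_n\to 1/(1+R_{l,\infty}^{(-1)})$, followed by elementary simplification of the rational expression, produces the claimed right-hand side of \eqref{eq:survival probability next stage q}.

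The main obstacle, such as it is, is purely bookkeeping: correctly identifying which sub-block of the reversed environment drives which segment of the trajectory, and invoking independence of these blocks at the appropriate moment. Once the two factors are read off, summing a geometric series and simplifying a rational expression close the argument. The quantity $\Pi_l(1+R_l^{(-1)}) = \Pi_l + \Pi_l R_l^{(-1)}$ that appears here is precisely the image of $\Pi_l + R_l$ under the finite-length reversal of the sub-environment $\bfe_{1:l}$ to $\bfe_{l:1}$, which explains the shape of the stated limit.
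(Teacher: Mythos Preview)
Your proof is correct and follows essentially the same route as the paper: the first two assertions are read off from the preceding computation of $\vth_{n,l}(\bfe_{n+l:1})\to\vth_l$, and the third is obtained by factoring $\bfP^{(n+l:1)}(Z_n>0,Z_{n+l}=0)$, evaluating each factor via the linear fractional formulae (survival probability $\Pi_l/[\Pi_{n+l}(1+R_{l,n}^{(-1)})]$, geometric law of $Z_n$ given survival, and $q_l(\bfe_{l:1})^{Z_n}$ for the extinction over the last $l$ steps), and passing to the limit $R_{l,n}^{(-1)}\to R_{l,\infty}^{(-1)}$. Two small remarks: your appeal to independence of the two environment sub-blocks, while true, is not actually needed in the quenched computation since everything is conditioned on the full environment; and your closing observation that $\Pi_l(1+R_l^{(-1)})$ is the image of $\Pi_l+R_l$ under the finite reversal is apt and reconciles your $\alpha$ with the form of the stated limit.
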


\begin{proof}
By \eqref{eq:dist Zn given Zn>0 bw},
$$ \bfP^{(n+l:l)}(Z_{n}\in\cdot|Z_{n}>0)\ =\ \Geom_{+}\left(\frac{1}{1+R_{l,n}^{(-1)}}\right)\quad\text{a.s.} $$
and this conditional law is the same under $\bfP^{(n+l:1)}$. Hence
$$ \bfE^{(n+l:1)}\left(s^{Z_{n}}|Z_{n}>0\right)\ =\ \frac{\big(1+R_{l,n}^{(-1)}\big)^{-1}s}{1-\big(1+R_{l,n}^{(-1)}\big)^{-1}R_{l,n}^{(-1)}s}. $$
We further have
\begin{align*}
\frac{\Pi_{n+l}}{\Pi_{l}}\,\bfP^{(n+l:1)}(Z_{n}>0)\ =\ \frac{1}{1+R_{l,n}^{(-1)}}\quad\text{a.s.}
\end{align*}
and so
\begin{align*}
\frac{\Pi_{n+l}}{\Pi_{l}}\,&\bfP^{(n+l:1)}(Z_{n}>0,Z_{n+l}=0)\\
&=\ \frac{\Pi_{n+l}}{\Pi_{l}}\,\bfE^{(n+l:1)}\left(q_{l}(\bfe_{l:1})^{Z_{n}}\1_{\{Z_{n}>0\}}\right)\\
&=\ \bfE^{(n+l:1)}\left(\left(\frac{\Pi_{l}+R_{l}-1}{\Pi_{l}+R_{l}}\right)^{Z_{n}}\bigg|Z_{n}>0\right)\frac{\Pi_{n+l}}{\Pi_{l}}\,\bfP^{(n+l:1)}(Z_{n}>0)\\
&=\ \frac{1-(\Pi_{l}+R_{l})^{-1}}{1+R_{l,n}^{(-1)}(\Pi_{l}+R_{l})^{-1}}\cdot\frac{1}{1+R_{l,n}^{(-1)}}\quad\text{a.s.}
\end{align*}
The asserted limit in \eqref{eq:survival probability next stage q} is now obvious.\qed
\end{proof}

In order to better understand the typical path to extinction, another process of interest in both the subcritical and critical regime is the so-called \emph{reduced GWPRE} $(Z_{m,n})_{0\le m\le n}$ for any $n\in\N$ and under $\bfP^{(1:n)}(\cdot|Z_{n}>0)$. Fixing a time horizon $n$, it accounts for the number of individuals in the population up to that time whose families have not died out by time $n$. More precisely, $Z_{m,n}$ equals the number of individuals in generation $m$ who have descendants in generation $n$. In random environment, it was studied in a series of papers by Vatutin with varying collaborators, often with a focus on annealed limit laws; see for example \cite{BorVat:97,VatDya:97,FleischVa:99,VatDya:02}. The following lemma is not difficult to prove but we refrain from giving the somewhat tedious technical details regarding its second part.

\begin{Lemma}\label{lem:reduced process laws}
The reduced process $(Z_{m,n})_{0\le m\le n}$ is nondecreasing and
\begin{equation}\label{eq:law Z_m,n q fw}
\bfP^{(1:n)}(Z_{m,n}\in\cdot|Z_{n}>0)\ =\ \Geom_{+}\!\left(1-\frac{R_{m}}{\Pi_{n}+R_{n}}\right).
\end{equation}
Moreover, $Z_{m,n}\eqdist\sum_{i=1}^{Z_{l,n}}\zeta_{i}^{l,m}$ for $0\le l<m\le n$, where $\zeta_{1}^{l,m},\zeta_{2}^{l,m},\ldots$ are independent of $Z_{l,n}$ under $\bfP^{(1:n)}(\cdot|Z_{n}>0)$ and further i.i.d.~with common law
\begin{equation}\label{eq:law of zeta_l,k q fw}
\Geom_{+}\!\left(1-\frac{R_{m}-R_{l}}{\Pi_{n}+R_{n}-R_{l}}\right).
\end{equation}
\end{Lemma}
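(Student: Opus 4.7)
The plan is to verify the three assertions in the order they appear: monotonicity of $m\mapsto Z_{m,n}$, the marginal geometric law \eqref{eq:law Z_m,n q fw}, and finally the branching decomposition together with \eqref{eq:law of zeta_l,k q fw}. Monotonicity is immediate, because every individual at generation $m$ with a descendant at generation $n$ has a unique parent at generation $m-1$ who also has a descendant at generation $n$, and the parent map is many-to-one, so $Z_{m-1,n}\leq Z_{m,n}$.

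For \eqref{eq:law Z_m,n q fw} I would apply the quenched branching property: given $\bfe$ and $Z_{m}$, the $Z_{m}$ subtrees rooted at the generation-$m$ individuals are independent, and each independently reaches generation $n$ with probability $1-f_{m+1:n}(0)$. Hence
$$\bfE^{(1:n)}\bigl(s^{Z_{m,n}}\bigr)\,=\,f_{1:m}\bigl(f_{m+1:n}(0)+(1-f_{m+1:n}(0))s\bigr).$$
Since $\{Z_{m,n}>0\}=\{Z_{n}>0\}$, the conditional g.f.\ under $\bfP^{(1:n)}(\cdot\mid Z_n>0)$ is obtained in the standard way. Plugging in the linear fractional identity \eqref{eq:basic identity for f_1:n} for $f_{1:m}$ together with the shifted identity $1-f_{m+1:n}(0)=\Pi_{m}/(\Pi_{n}+R_{n}-R_{m})$, which follows directly from $\Pi_n/\Pi_m=\prod_{k=m+1}^{n}A_{k}$ and $R_{n}-R_{m}=\sum_{k=m+1}^{n}\Pi_{k-1}B_{k}$, a short manipulation reduces the result to the g.f.\ of $\Geom_{+}(1-R_{m}/(\Pi_{n}+R_{n}))$, which is what \eqref{eq:law Z_m,n q fw} claims.

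For the decomposition, the key ingredient is the quenched branching property at generation $l$: conditional on $\bfe$ and $Z_{l}$, the $Z_{l}$ subtrees rooted at the generation-$l$ individuals are i.i.d.\ copies of a GWPRE running in the shifted environment $(\bfe_{l+1},\ldots,\bfe_{n})$. Each such subtree independently has a descendant at generation $n$ with probability $1-f_{l+1:n}(0)$, and $Z_{l,n}$ is by definition the number of surviving subtrees. Since grouping an i.i.d.\ sample by a binary functional preserves conditional i.i.d.\ structure within each class, given $\bfe$ and $Z_{l,n}$ the surviving subtrees are conditionally i.i.d.\ with the law of a single subtree conditioned on reaching generation $n$. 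Inside any such surviving subtree, the number of generation-$m$ descendants that themselves reach generation $n$ is the exact analogue of ``$Z_{m-l,n-l}$ given survival'' for the shifted GWPRE; applying \eqref{eq:law Z_m,n q fw} already proved to that shifted process and translating through the shift relations above yields the asserted law $\Geom_{+}(1-(R_{m}-R_{l})/(\Pi_{n}+R_{n}-R_{l}))$.

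The main obstacle, which is presumably what the author has in mind by \emph{somewhat tedious technical details}, is the rigorous justification of the conditional i.i.d.\ structure of $\zeta_{1}^{l,m},\zeta_{2}^{l,m},\ldots$ given $Z_{l,n}$ under the measure $\bfP^{(1:n)}(\cdot\mid Z_{n}>0)$, since the conditioning event breaks naive independence statements. The cleanest route is to first condition on the full vector of subtree survival indicators at generation $l$, exploit the quenched branching property to decouple subtrees, extract the conditional law of a single surviving subtree, and only then marginalize back to obtain the statement in the form given.
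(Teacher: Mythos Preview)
Your argument is correct and essentially matches the paper's. For \eqref{eq:law Z_m,n q fw} the paper phrases the same computation as a Bernoulli thinning of $Z_{m}\sim LF(\Pi_{m},R_{m})$ with survival probability $1-f_{m+1:n}(0)=\Pi_{m}/(\Pi_{n}+R_{n}-R_{m})$, yielding $Z_{m,n}\sim LF(\Pi_{n}+R_{n}-R_{m},R_{m})$ directly at the level of the $LF$-parameters rather than via the g.f.\ identity $\bfE^{(1:n)}(s^{Z_{m,n}})=f_{1:m}(f_{m+1:n}(0)+(1-f_{m+1:n}(0))s)$; this is the same step in different notation. For the second part the paper explicitly omits the ``somewhat tedious technical details'', and your sketch (branching property at generation $l$, conditional i.i.d.\ structure of surviving subtrees, then apply the already established \eqref{eq:law Z_m,n q fw} to the shifted environment) is precisely the intended route and correctly isolates where the bookkeeping lies.
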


One can interpret $\zeta_{i}^{l,m}$ as the number of individuals in generation $m$ who have descendants in generation $n$ and are stemming from individual $i$ in generation $l$ (who has therefore descendants in generation $n$ as well).

\begin{proof}
As for \eqref{eq:law Z_m,n q fw}, it suffices to note that $Z_{m,n}$ is obtained from $Z_{m}$ by
tagging each individual in generation $m$ which has offspring in generation $n$. Formally, $Z_{m,n}=\sum_{k=1}^{Z_{m}}I_{k}$ where $I_{1},I_{2},\ldots$ are i.i.d.~Bernoulli variables under $\bfP^{(1:n)}$ with parameter
$$ 1-q_{n-m}(\bfe_{m+1:n})\ =\ \left(\frac{\Pi_{n}}{\Pi_{m}}+\frac{R_{n}-R_{m}}{\Pi_{m}}\right)^{-1}\ =\ \frac{\Pi_{m}}{\Pi_{n}+R_{n}-R_{m}}. $$
Since $\bfP^{(1:n)}(Z_{m}\in\cdot)=LF(\Pi_{m},R_{m})$, the law of $Z_{m,n}$ under $\bfP^{(1:n)}$ equals
$$ LF\left(\frac{\Pi_{m}}{1-q_{n-m}(\bfe_{m+1:n})},R_{m}\right)\ =\ LF(\Pi_{n}+R_{n}-R_{m},R_{m}). $$
Now \eqref{eq:law Z_m,n q fw} follows easily when additionally conditioning on the event $\{Z_{n}>0\}$ which is the same as $\{Z_{m,n}>0\}$.\qed
\end{proof}

We also note the conditional law of $Z_{m,n}$ when reversing the environment as in the previous results, thus under $\bfP^{(n:1)}(\cdot|Z_{n}>0)$. As one can readily check,
\begin{gather}\label{eq:law Z_m,n q bw}
\bfP^{(n:1)}(Z_{m,n}\in\cdot|Z_{n}>0)\ =\ \Geom_{+}\!\left(\frac{1+R_{n-m}^{(-1)}}{1+R_{n}^{(-1)}}\right),\\
\bfP^{(n:1)}(\zeta_{1}^{l,m}\in\cdot|Z_{n}>0)\ =\ \Geom_{+}\!\left(\frac{1+R_{n-m}^{(-1)}}{1+R_{n-l}^{(-1)}}\right),\label{eq:law of zeta_l,k q bw}
\end{gather}
and also that the two random parameters appearing in the geometric laws in \eqref{eq:law Z_m,n q fw} and \eqref{eq:law Z_m,n q bw}, and also those in \eqref{eq:law of zeta_l,k q fw} and \eqref{eq:law of zeta_l,k q bw} are identically distributed as they must.

\begin{Theorem}\label{thm:branchless q}
(a) Let $(m_{n})_{n\ge 1}$ be an integer sequence such that $n-m_{n}\to\infty$. Then
\begin{gather}
\lim_{n\to\infty}\Erw\bfP^{(1:n)}(Z_{m_{n},n}>1|Z_{n}>0)\ =\ 0\label{eq:branchless q fw}
\shortintertext{and}
\bfP^{(n:1)}(Z_{m_{n},n}>1|Z_{n}>0)\ \xrightarrow{n\to\infty}\ 0\quad\text{a.s.}\label{eq:branchless q bw}
\intertext{If, furthermore, $\limsup_{n\to\infty}n^{-1}m_{n}<1$}
\label{eq:moment assumption branchless q}
\Erw\log^{2}A\,<\,\infty\quad\text{and}\quad\Erw\log^{2}B\,<\,\infty,
\shortintertext{then also}
\bfP^{(1:n)}(Z_{m_{n},n}>1|Z_{n}>0)\ \xrightarrow{n\to\infty}\ 0\quad\text{a.s.}\label{eq2:branchless q fw}
\end{gather}
holds true.

\vspace{.1cm}
(b) For arbitrary $m\in\N$, let $(Z_{k}^{*})_{0\le k\le m}$ be a GWP in the random environment $\bfe_{m:1}$ such that
$$ \bfP^{(m:1)}(Z_{0}^{*}\in\cdot)\ =\ \Geom_{+}\!\left(\frac{1+R_{m}^{(-1)}}{1+R_{\infty}^{(-1)}}\right). $$
Individuals in generation $k$ are supposed to have the same conditional offspring law as $(Z_{n-m+k,n})_{0\le k\le m}$ under $\bfP^{(n:1)}(\cdot|Z_{n}>0)$ for each $k\in\{0,\ldots,m-1\}$, thus
\begin{align*}
\Geom_{+}\!\left(\frac{1+R_{m-k-1}^{(-1)}}{1+R_{m-k}^{(-1)}}\right).
\end{align*}
Then
\begin{multline}
\left\|\bfP^{(n:1)}((Z_{k,n})_{n-m\le k\le n}\in\cdot|Z_{n}>0)\right.\\
-\left.\bfP^{(m:1)}((Z_{k}^{*})_{0\le k\le m}\in\cdot)\right\|\ \xrightarrow{n\to\infty}\ 0\quad\text{a.s.}
\end{multline}
\end{Theorem}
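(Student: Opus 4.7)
The proof rests on the explicit geometric descriptions of the reduced process in Lemma \ref{lem:reduced process laws} together with the a.s.\ monotone convergence $R_n^{(-1)}\uparrow R_\infty^{(-1)}<\infty$ supplied by Proposition \ref{prop:duality lemma}. The reason the reversed-environment assertions come out cleaner is that the parameters of the geometric laws in \eqref{eq:law Z_m,n q bw}--\eqref{eq:law of zeta_l,k q bw} depend on $\bfe$ only through the a.s.-convergent sequence $(R_k^{(-1)})_{k\ge 1}$, whereas under $\bfP^{(1:n)}$ they mix $R_{m_n}$ with $\Pi_n$ and $R_n$ in an $n$-dependent way.

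\emph{Proof of (a).} Since $\Prob(\Geom_+(p)>1)=1-p$, identity \eqref{eq:law Z_m,n q bw} yields
\[
\bfP^{(n:1)}(Z_{m_n,n}>1|Z_n>0)\ =\ \frac{R_n^{(-1)}-R_{n-m_n}^{(-1)}}{1+R_n^{(-1)}},
\]
and because $n-m_n\to\infty$ both $R_n^{(-1)}$ and $R_{n-m_n}^{(-1)}$ tend a.s.\ to $R_\infty^{(-1)}$, proving \eqref{eq:branchless q bw}. Assertion \eqref{eq:branchless q fw} then follows by dominated convergence once one notes, via \eqref{eq:equal quenched laws}, that $\bfP^{(1:n)}(Z_{m_n,n}>1|Z_n>0)$ and $\bfP^{(n:1)}(Z_{m_n,n}>1|Z_n>0)$ have the same law, so their expectations coincide. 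For \eqref{eq2:branchless q fw}, formula \eqref{eq:law Z_m,n q fw} gives
\[
\bfP^{(1:n)}(Z_{m_n,n}>1|Z_n>0)\ =\ \frac{R_{m_n}}{\Pi_n+R_n}\ \le\ \frac{R_{m_n}}{\Pi_{m_n}}\cdot\frac{\Pi_{m_n}}{\Pi_n}.
\]
Subcriticality combined with $\limsup n^{-1}m_n<1$ and the SLLN for $T_n:=\sum_{k=1}^n\log A_k$ (positive drift in the subcritical regime) yields $T_n-T_{m_n}\ge cn$ a.s.\ eventually for some $c>0$, so the second factor decays exponentially, $\Pi_{m_n}/\Pi_n\le e^{-cn}$ a.s.\ eventually. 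For the first factor, use the crude bound $R_k\le k(\max_{i\le k}\Pi_i)(\max_{j\le k}B_j)$, control $\max_{i\le k}\Pi_i/\Pi_k$ via the LIL for $(T_n)$ (for which $\Erw\log^2 A<\infty$ is needed), and dominate $\max_{j\le n}\log B_j$ by $O(n^{1/2+\varepsilon})$ a.s.\ through a direct Borel--Cantelli estimate (using $\Erw\log^2 B<\infty$); this gives $\log(R_{m_n}/\Pi_{m_n})=o(n)$ a.s., and the product tends to $0$.

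The technical heart of the argument is the last a.s.\ bound on $M_{m_n}:=R_{m_n}/\Pi_{m_n}$: since $M_k$ converges only in distribution to the typically heavy-tailed $R_\infty^{(-1)}$, a quantitative input from the $\log^2$ moment assumptions is needed to keep the a.s.\ growth of $M_{m_n}$ sub-exponential. This is precisely why \eqref{eq2:branchless q fw} calls for hypotheses that its reversed-environment counterpart \eqref{eq:branchless q bw} does without.

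\emph{Proof of (b).} Set $Y_k:=Z_{n-m+k,n}$ for $0\le k\le m$. Applying Lemma \ref{lem:reduced process laws} with $l=n-m+k$ and the lemma's $m$ replaced by $n-m+k+1$, and translating to the reversed environment via \eqref{eq:law of zeta_l,k q bw}, shows that under $\bfP^{(n:1)}(\cdot|Z_n>0)$ the process $(Y_k)_{0\le k\le m}$ is a time-inhomogeneous GWP whose offspring law in generation $k$ equals
\[
\Geom_+\!\left(\frac{1+R_{m-k-1}^{(-1)}}{1+R_{m-k}^{(-1)}}\right)
\]
for $k\in\{0,\ldots,m-1\}$, depending on $\bfe_{m:1}$ but \emph{not} on $n$, while its initial distribution is $\Geom_+((1+R_m^{(-1)})/(1+R_n^{(-1)}))$. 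This offspring kernel coincides exactly with the one prescribed for $(Z_k^*)$, so a standard step-by-step coupling built on the common kernel reduces the joint total-variation distance to the distance between the two initial geometric laws. Since $R_n^{(-1)}\to R_\infty^{(-1)}$ a.s.\ and the map $p\mapsto\Geom_+(p)$ is continuous in total variation, this distance tends to $0$ a.s., completing the proof.
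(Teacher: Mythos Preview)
Your arguments for \eqref{eq:branchless q bw}, \eqref{eq:branchless q fw} and part (b) coincide with the paper's: the same formula $\bfP^{(n:1)}(Z_{m_n,n}>1\mid Z_n>0)=(R_n^{(-1)}-R_{n-m_n}^{(-1)})/(1+R_n^{(-1)})$, the same passage to \eqref{eq:branchless q fw} via equality in law plus dominated convergence, and the same observation for (b) that under $\bfP^{(n:1)}(\cdot\mid Z_n>0)$ the offspring laws of $(Z_{n-m+k,n})_k$ from \eqref{eq:law of zeta_l,k q bw} are \emph{independent of $n$}, so only the initial geometric law has to converge.

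For \eqref{eq2:branchless q fw} you take a genuinely different route from the paper. The paper does not work pathwise; it first passes to the distributional bound
\[
\frac{R_{m_n}}{\Pi_n+R_n}\ \le\ \frac{R_{m_n}}{\Pi_n+R_{m_n}}\ \eqdist\ \frac{R_{m_n}^{(-1)}}{\wh\Pi_{l_n}+R_{m_n}^{(-1)}}\ \le\ \frac{R_{\infty}^{(-1)}}{\wh\Pi_{l_n}+R_{\infty}^{(-1)}},
\]
with $l_n=n-m_n$ and $\wh\Pi_{l_n}$ an independent copy of $\Pi_{l_n}$, and then shows summability of the tail probabilities via the Hsu--Robbins complete-convergence theorem for $(S_n)$ (this is where $\Erw\log^{2}A<\infty$ enters) together with the moment bound $\Erw\log(1+R_\infty^{(-1)})<\infty$ taken from an external reference. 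Borel--Cantelli then gives the a.s.\ conclusion. Your argument instead bounds the random variable itself by $M_{m_n}\cdot\Pi_{m_n}/\Pi_n$, gets exponential decay of the second factor from the SLLN and $\limsup m_n/n<1$, and controls $\log M_{m_n}=o(n)$ by crude bounds on $R_k$ combined with the LIL for $(T_k)$ and a Borel--Cantelli estimate on $\max_{j\le k}\log B_j$ using $\Erw\log^{2}B<\infty$. This is correct and is more self-contained: it avoids both Hsu--Robbins and the cited perpetuity moment result. (In fact the LIL is stronger than you need; the SLLN already yields $\max_{i\le k}(T_{i-1}-T_k)=o(k)$, which suffices.) What the paper's approach buys in exchange is that it never has to track the pathwise growth of the non-convergent chain $M_k$, replacing it by the a.s.\ finite limit $R_\infty^{(-1)}$ at the cost of a probability estimate.
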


\begin{proof}
(a) Put $l_{n}:=n-m_{n},\,\rho:=\liminf_{n\to\infty}n^{-1}l_{n}$, and let $\wh{\Pi}_{k}$ be a copy of $\Pi_{k}$ independent of all other occurring random variables. Use \eqref{eq:law Z_m,n q bw}, $l_{n}\to\infty$ and $R_{n}^{(-1)}\to R_{\infty}^{(-1)}$ a.s. to infer
\begin{align*}
\bfP^{(n:1)}(Z_{m_{n},n}>1|Z_{n}>0)\ =\ \frac{R_{n}^{(-1)}-R_{l_{n}}^{(-1)}}{1+R_{n}^{(-1)}}\ \xrightarrow{n\to\infty}\ 0\quad\text{a.s.}
\end{align*}
and thus \eqref{eq:branchless q bw}. But this also implies \eqref{eq:branchless q fw} as
$$ \bfP^{(n:1)}(Z_{m_{n},n}>1|Z_{n}>0)\ \eqdist\ \bfP^{(1:n)}(Z_{m_{n},n}>1|Z_{n}>0) $$
for each $n$ and by the dominated convergence theorem.

\vspace{.1cm}
Turning to \eqref{eq2:branchless q fw} under the stated extra moment conditions, observe that \eqref{eq:law Z_m,n q fw} entails
\begin{align*}
\bfP^{(1:n)}(Z_{m_{n},n}>1|Z_{n}>0)\ &=\ \frac{R_{m_{n}}}{\Pi_{n}+R_{n}}\ \le\ \frac{R_{m_{n}}}{\Pi_{n}+R_{m_{n}}}\ \eqdist\ \frac{R_{m_{n}}^{(-1)}}{\wh{\Pi}_{l_{n}}+R_{m_{n}}^{(-1)}}.
\end{align*}
Assuming \eqref{eq:moment assumption branchless q} and putting $\nu=\Erw\log A$, it follows by the Hsu-Robbins theorem (see \cite[Cor.~10.4.2]{Chow+Teicher:97}) that
\begin{gather*}
\sum_{n\ge 1}\Prob(|S_{n}+n\nu|>\eps n)\ <\ \infty\quad\text{for all }\eps>0,
\shortintertext{and by \cite[Thm.~1.2]{AlsIks:09} that}
\Erw\log(1+R_{\infty}^{(-1)})\ <\ \infty.
\end{gather*}
In order to conclude  \eqref{eq2:branchless q fw}, it suffices to show that
$$ \Prob\Big(\bfP^{(1:n)}(Z_{m_{n},n}>1\Big| Z_{n}>0)>\eps\Big)\ <\ \infty $$
for all $\eps>0$. To this end, observe that
\begin{align*}
\Prob\Big(\bfP^{(1:n)}&(Z_{m_{n},n}>1\Big|Z_{n}>0)>\eps\Big)\ \le\ \Prob\left(\frac{R_{m_{n}}^{(-1)}}{\wh{\Pi}_{l_{n}}+R_{m_{n}}^{(-1)}}>\eps\right)\\
&\ \le\ \Prob\left(\frac{R_{\infty}^{(-1)}}{\wh{\Pi}_{l_{n}}+R_{\infty}^{(-1)}}>\eps\right)\\
&\le\ \Prob\left(R_{\infty}^{(-1)}>e^{\eps n}\right)\,+\,\Prob\left(R_{\infty}^{(-1)}\le e^{\eps n},\frac{e^{\eps n}}{\Pi_{l_{n}}+e^{\eps n}}>\eps\right)\\
&\le\ \Prob\left(\log R_{\infty}^{(-1)}>\eps n\right)\,+\,\Prob\left(S_{l_{n}}>-\eps n-\log(\eps^{-1}-1)\right)\\
&\le\ \Prob\left(\log R_{\infty}^{(-1)}>\eps n\right)\,+\,\Prob\left(S_{l_{n}}+\nu l_{n}>(\nu-2\rho\eps) l_{n}\right)
\end{align*}
the last estimate being valid for sufficiently large $n$ only. But for $\eps$ so small that $\nu-2\rho\eps>0$ (recalling $\nu>0$), we now conclude the summability of the last line over $n\ge 1$ as required.

\vspace{.1cm}
Since, by Lemma \ref{lem:reduced process laws}, $Z_{k+1,n}=\sum_{i=1}^{Z_{k,n}}\zeta_{i}^{k-1,k}$ with $\zeta_{1}^{k-1,k},\zeta_{2}^{k-1,k},\ldots$ as described there, it suffices to note that, by recalling \eqref{eq:law Z_m,n q bw} and \eqref{eq:law of zeta_l,k q bw},
\begin{gather*}
\bfP^{(n:1)}(Z_{n-m,n}\in\cdot|Z_{n}>0)\ \xrightarrow{n\to\infty}\ \Geom_{+}\!\left(\frac{1+R_{m}^{(-1)}}{1+R_{\infty}^{(-1)}}\right)
\intertext{in total variation, and}
\bfP^{(n:1)}(\zeta_{1}^{n-m+k-,n-m+k+1}\in\cdot|Z_{n}>0)\ =\ \Geom_{+}\!\left(\frac{1+R_{m-k-1}^{(-1)}}{1+R_{m-k}^{(-1)}}\right)
\end{gather*}
for each $k=0,\ldots,m-1$.\qed
\end{proof}

Turning to annealed results under the common assumption
\begin{equation}\label{eq:LlogL assumption 1/A}
\Erw f'(1)\log f'(1)\ =\ \Erw(1/A)\log(1/A)\ <\ \infty,
\end{equation}
three subregimes must be distinguished for which the annealed probability of survival $\Prob(Z_{n}>0)$ exhibits a different behavior as $n\to\infty$, see \cite[Sect.\,2.4]{KerstingVatutin:17}, \cite{GeKeVa:03} and also \cite{FleischVa:99} which focusses on the linear fractional case. Let $\psi(\theta):=\log\Erw e^{\theta\log(1/A)}=\log\Erw(1/A)^{\theta}$ denote the cumulant g.f.~of $\log(1/A)$ which, by \eqref{eq:LlogL assumption 1/A}, is finite for $\theta\in [0,1]$ with finite derivative
$$ \psi'(\theta)=e^{-\psi(\theta)}\Erw\log(1/A)(1/A)^{\theta}. $$
A linear fractional GWP in i.i.d.~random environment $(Z_{n})_{n\ge 0}$ is called
\begin{itemize}\itemsep2pt
\item\emph{strongly subcritical} if $\psi'(0)<0$ and $\psi'(1)<0$;
\item\emph{intermediately subcritical} if  $\psi'(0)<0$ and $\psi'(1)=0$;
\item\emph{weakly subcritical} if  $\psi'(0)<0$ and $\psi'(1)>0$.
\end{itemize}
Putting $\kappa:=\psi(1)=\log\Erw(1/A)$, we can define a new measure $\wh{\Prob}$ with the help of the positive martingale $(e^{-\kappa n}\Pi_{n}^{-1})_{n\ge 0}$, namely
$$ \wh{\Prob}(A)\ :=\ \int_{A}\frac{e^{-\kappa n}}{\Pi_{n}}\ d\Prob\ =\ \int_{A}e^{S_{n}-\kappa n}\ d\Prob $$
for $A\in\sigma((A_{k},B_{k})_{0\le k\le n})$ and any $n\in\N_{0}$. Under $\wh{\Prob}$, the $(A_{k},B_{k})$ are again i.i.d.~and $\wh{\Erw}\log(1/A)=e^{\kappa}\,\Erw(1/A)\log(1/A)$. The convexity of $\psi$ together with $\psi'(0)<0$ entails that $\kappa<0$ in the strongly and intermediately subcritical case, in fact
$$ e^{\kappa}\ =\ \Erw(1/A)\ =\ \inf_{0\le\theta\le 1}\Erw(1/A)^{\theta}. $$
As a direct consequence of \eqref{eq:survival probability explicit bw}, we obtain
\begin{align}\label{eq:surv probability a}
\Prob(Z_{n}>0)\ &=\ \Erw\bfP^{(n:1)}(Z_{n}>0)\\
&=\ \Erw\left(\frac{1}{\Pi_{n}(1+R_{n}^{(-1)})}\right)\ =\ e^{\kappa n}\,\wh{\Erw}\left(\frac{1}{1+R_{n}^{(-1)}}\right)
\end{align}
and also the inequality
\begin{align}\label{eq:bounds surv probability a}
\frac{1}{1+\wh\Erw R_{n}^{(-1)}}\ \le\ e^{-\kappa n}\,\Prob(Z_{n}>0)\ \le\ 1
\end{align}
for all $n\in\N$. Here the upper bound is trivial while the lower one follows by Jensen's inequality. We now see that the survival probability essentially behaves like $e^{\kappa n}$ if $\wh\Erw R_{\infty}^{(-1)}$, the monotone limit of $\wh\Erw R_{n}^{(-1)}$, is finite which, as will be seen below, does only hold in the strongly subcritical case.

\vspace{.1cm}
Regarding the annealed conditional law of $Z_{n}$ given $Z_{n}>0$, \eqref{eq:survival probability explicit bw} implies that it is a mixture of geometric laws on $\N$ with a mixing measure on the open unit interval $(0,1)$ that again involves the finite perpetuity $R_{n}^{(-1)}$ in terms of $(1+R_{n}^{(-1)})^{-1}$. More precisely,
\begin{align}
&\Prob(Z_{n}\in\cdot\,| Z_{n}>0)\nonumber\\
&=\ \frac{1}{\Prob(Z_{n}>0)}\int\bfP^{(n:1)}(Z_{n}\in\cdot|Z_{n}>0)\,\bfP^{(n:1)}(Z_{n}>0)\ d\Prob\nonumber\\
&=\ \frac{1}{e^{-\kappa n}\,\Prob(Z_{n}>0)}\int\Geom_{+}\left(\frac{1}{1+R_{n}^{(-1)}}\right)\,\frac{e^{-\kappa n}}{\Pi_{n}(1+R_{n}^{(-1)})}\ d\Prob\nonumber\\
&=\ \int_{(0,1)}\theta\,\Geom_{+}(\theta)\ \Lambda_{n}(d\theta)\label{eq:dist Zn given Zn>0 a}
\end{align}
for each $n\in\N$, where
\begin{equation*}
\Lambda_{n}(d\theta)\ =\ \theta\,\wh{\Prob}\left(\frac{1}{1+R_{n}^{(-1)}}\in d\theta\right)\Bigg\slash\wh{\Erw}\left(\frac{1}{1+R_{n}^{(-1)}}\right)
\end{equation*}
As a particular consequence, using that $\Geom_{+}(\theta)$ has mean $1/\theta$, we find the following annealed analog of \eqref{eq:survival probability explicit}:
\begin{align}\label{eq:surv mean<->surv prob a}
\Erw(Z_{n}|Z_{n}>0)\ =\ \frac{1}{e^{-\kappa n}\,\Prob(Z_{n}>0)}.
\end{align}
In the strongly subcritical case, the previous identities quite directly lead to asymptotic results as $n\to\infty$, summarized in the subsequent theorem.

\begin{Theorem}\label{thm:Yaglom limit}
Let $(Z_{n})_{n\ge 0}$ be strongly subcritical, that is $\Erw(1/A)<\infty$ and $-\infty<\Erw(1/A)\log(1/A)<0$, and also $\Erw\log B<\infty$. Then $\wh{\Prob}(R_{\infty}^{(-1)}<\infty)=1$ and
\begin{equation}\label{eq:Yaglom annealed}
\left\|\Prob(Z_{n}\in\cdot|Z_{n}>0)-\int_{(0,1)}\theta\,\Geom_{+}(\theta)\ \Lambda_{\infty}(d\theta)\right\|\ \xrightarrow{n\to\infty}\ 0,
\end{equation}
with $\Lambda_{\infty}$ defined as $\Lambda_{n}$ above for $R_{\infty}^{(-1)}$. Furthermore,
\begin{gather}
e^{-\kappa n}\,\Prob(Z_{n}>0)\ =\ \frac{1}{\Erw(Z_{n}|Z_{n}>0)}\ \xrightarrow{n\to\infty}\ \wh{\Erw}\left(\frac{1}{1+R_{\infty}^{(-1)}}\right).\label{eq:limit surv probability a}
\end{gather}
\end{Theorem}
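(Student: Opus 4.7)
The plan is to handle the three conclusions in sequence, each exploiting the fact that the Esscher-tilted measure $\wh{\Prob}$ keeps the $(A_k,B_k)_{k\ge 1}$ i.i.d., with marginal $\wh{\Prob}((A,B)\in\cdot) = e^{-\kappa}\,\Erw[(1/A)\,\mathbbm{1}_{\{(A,B)\in\cdot\}}]$. A direct computation yields
$$ \wh{\Erw}\log(1/A)\ =\ e^{-\kappa}\,\Erw[(1/A)\log(1/A)]\ =\ \psi'(1), $$
which is strictly negative in the strongly subcritical regime. Therefore $\sum_{k=1}^{n}\log(1/A_{k})\to-\infty$ and hence $\Pi_{n}\to\infty$ $\wh{\Prob}$-a.s., so Proposition \ref{prop:duality lemma} applied to the i.i.d.~sequence $(A_{k},B_{k})$ under $\wh{\Prob}$ gives $\wh{\Prob}(R_{\infty}^{(-1)}<\infty)=1$ as soon as the Goldie--Maller integral $\wh{I}_{+}$ is finite. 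Because $\wh{J}^{+}(x)\to\wh{\Erw}\log^{+}A>0$ as $x\to\infty$, the finiteness of $\wh{I}_{+}$ reduces to $\wh{\Erw}\log^{+}(B/A)<\infty$, which I would derive from $-\infty<\Erw[(1/A)\log(1/A)]<0$ and $\Erw\log B<\infty$ via the explicit $\wh{\Prob}$-marginal formula and the crude bound $\log^{+}(B/A)\leq\log^{+}B+\log^{-}A$.

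For the survival-probability asymptotic \eqref{eq:limit surv probability a}, I would invoke identity \eqref{eq:surv probability a}, namely $e^{-\kappa n}\Prob(Z_{n}>0)=\wh{\Erw}[1/(1+R_{n}^{(-1)})]$. Since $R_{n}^{(-1)}\uparrow R_{\infty}^{(-1)}$ $\wh{\Prob}$-a.s.~with a $\wh{\Prob}$-a.s.~finite limit by the first step, $1/(1+R_{n}^{(-1)})$ decreases monotonically to $1/(1+R_{\infty}^{(-1)})$ inside $[0,1]$, and monotone (or bounded) convergence supplies the claim; the middle equality $e^{-\kappa n}\Prob(Z_{n}>0)=1/\Erw(Z_{n}|Z_{n}>0)$ is just \eqref{eq:surv mean<->surv prob a}.

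For the total variation statement \eqref{eq:Yaglom annealed}, I would use representation \eqref{eq:dist Zn given Zn>0 a} to express the $k$-th point mass of $\Prob(Z_{n}\in\cdot\,|\,Z_{n}>0)$ as a quotient of two $\wh{\Erw}$-integrals of bounded polynomial functions of $\theta_{n}:=1/(1+R_{n}^{(-1)})$. Since $\theta_{n}\to\theta_{\infty}:=1/(1+R_{\infty}^{(-1)})$ $\wh{\Prob}$-a.s.~and the integrands lie in $[0,1]$, the dominated convergence theorem yields pointwise convergence of the probability mass functions on $\N$, and Scheff\'e's lemma then upgrades this pointwise convergence to convergence in total variation.

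The main obstacle I expect is the first step, specifically the verification of $\wh{I}_{+}<\infty$ in the tilted measure: the Esscher tilt couples the conditional distribution of $B$ to the factor $1/A$, so transferring $\Erw\log B<\infty$ into a genuinely useful integrability statement under $\wh{\Prob}$ requires some care. Once $\wh{\Prob}(R_{\infty}^{(-1)}<\infty)=1$ is in hand, the remaining two assertions reduce to routine applications of monotone convergence and Scheff\'e's lemma.
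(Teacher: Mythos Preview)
Your proposal is correct and takes essentially the same approach as the paper: establish $\wh{\Prob}(R_\infty^{(-1)}<\infty)=1$ via the Goldie--Maller/duality criterion under $\wh{\Prob}$, then use \eqref{eq:surv probability a} with monotone convergence for \eqref{eq:limit surv probability a}, and upgrade pointwise convergence on $\N$ to total variation for \eqref{eq:Yaglom annealed} (the paper's ``the involved distributions are living on $\N$'' is exactly your Scheff\'e step). The obstacle you flag---transferring $\Erw\log B<\infty$ to the needed integrability under $\wh{\Prob}$ in the presence of the $1/A$-tilt---is not spelled out in the paper's proof either, which simply asserts that Prop.~\ref{prop:GoldieMaller} applies; so on this point you are at least as careful as the original.
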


\begin{proof}
If $\wh{\Erw}\log(1/A)=e^{-\kappa}\Erw(1/A)\log(1/A)\in\IRl$ and $\Erw\log B<\infty$, then Prop.~\ref{prop:GoldieMaller} ensures that $R_{\infty}^{(-1)}<\infty$ $\wh{\Prob}$-a.s., which in turn implies the weak convergence of $\Lambda_{n}$ to $\Lambda_{\infty}$. Assertion \eqref{eq:Yaglom annealed} is now immediate because the involved distributions are living on $\N$. A combination of \eqref{eq:surv probability a}, \eqref{eq:surv mean<->surv prob a}
and the monotone convergence theorem further provides \eqref{eq:limit surv probability a}.
\qed
\end{proof}

One can also state annealed versions of Theorem \ref{thm:extinction next stage q} and Theorem \ref{thm:branchless q}, for the latter see also \cite[Thm.~2]{FleischVa:99}, but we refrain from doing so here. The neat representation of the asymptotic Yaglom law as a mixture of geometric laws does no longer hold in the intermediately subcritical case, nor in the weakly subcritical case because the a.s.~monotone limit $R_{\infty}^{(-1)}$ of the $R_{n}^{(-1)}$ is no longer finite under the necessary change of measure, at least when ruling out the case that $Ax+B=x$ a.s. for some $x\in\R$. Regarding the intermediately subcritical case, this measure change is the same as in the strongly subcritical case, but since
$$ \wh{\Erw}\log(1/A)\ =\ e^{-\kappa}\,\Erw(1/A)\log(1/A)\ =\ 0, $$
it follows that $\limsup_{n\to\infty}\Pi_{n}^{-1}=\limsup_{n\to\infty}e^{S_{n}}=\infty$ $\wh{\Prob}$-a.s.~by the classical Chung-Fuchs theorem for centered random walks and then $R_{\infty}^{(-1)}=\infty$ $\wh{\Prob}$-a.s.~by another appeal to Prop.~\ref{prop:GoldieMaller}. On the other hand, the existence of an asymptotic Yaglom law can also be shown in these two subregimes (providing some extra conditions), and we refer to \cite{GeKeVa:03} as well as the monograph \cite{KerstingVatutin:17} for more details and an account of further relevant literature.

\section{The supercritical case}\label{sec:supercritical}

Suppose now that $(Z_{n})_{n\ge 0}$ is supercritical, thus $R_{\infty}<\infty=R_{\infty}^{(-1)}$ and also $\Pi_{n}\to 0$ a.s. Recall that $q(\bfe)$ denotes the extinction probability given $\bfe$ and also $\bfe_{\geq n}=(A_{k},B_{k})_{k\ge n}$ for $n\in\N$, thus $\bfe_{\geq 2}=\bfe$. Then it is well-known \cite{AthreyaKarlin:71a} that
\begin{equation}\label{eq:q(bfe)}
q(\bfe_{\geq 1})\ =\ f_{1}(q(\bfe_{\geq 2}))\quad\text{a.s.}
\end{equation}
and that $\{q(\bfe_{\geq 1})=1\}$ is a.s. shift-invariant, i.e.
$$ \{q(\bfe_{\geq 1})=1\}=\{q(\bfe_{\geq 2})=1\}\quad\text{a.s.} $$
Consequently, by ergodicity of the environment,
\begin{equation}\label{eq:0-1 for q(bfe)}
\Prob(q(\bfe_{\geq 1})=1)\ \in\ \{0,1\}.
\end{equation}
In the case when $q(\bfe_{\geq 1})<1$ a.s., we can restate \eqref{eq:q(bfe)} as
\begin{equation}\label{eq:1/1-q(bfe)}
\frac{1}{1-q(\bfe_{\geq 1})}\ =\ \frac{A_{1}}{1-q(\bfe_{\geq 2})}+B_{1},
\end{equation}
thus $q(\bfe_{\geq 1})=\vph^{-1}\circ g_{1}\circ\vph(q(\bfe_{\geq 2}))$. The next theorem is now immediate.

\begin{Theorem}\label{thm:extinction probability}
Let $(Z_{n})_{n\ge 0}$ be supercritical. Then 
\begin{equation}\label{eq:q(bfe) explicit}
\frac{1}{1-q(\bfe)}\ =\ R_{\infty}\ \in\ [1,\infty)\quad\text{a.s.},
\end{equation}
in particular $q(\bfe)<1$ a.s. Furthermore, recalling $\bfP=\Prob(\cdot|\bfe)$,
\begin{equation}\label{eq:surv prob supercritical}
\lim_{n\to\infty}\frac{1}{\Pi_{n}}\left(\frac{1}{R_{n}}-\bfP(Z_{n}>0)\right)\ =\ \frac{1}{R_{\infty}^{2}}\quad\text{a.s.}
\end{equation}
\end{Theorem}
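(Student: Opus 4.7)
The plan is to derive both assertions by direct computation from the explicit quenched survival-probability formula \eqref{eq:survival probab quenched}, exploiting the fact that under supercriticality we are in case (C1) of Proposition \ref{prop:trichotomy}, so that $\Pi_{n}\to 0$ a.s. and $R_{n}\uparrow R_{\infty}<\infty$ a.s.

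For the first assertion, I would start from \eqref{eq:survival probab quenched}, which gives $q_{n}(\bfe_{1:n})=1-(\Pi_{n}+R_{n})^{-1}$ a.s. Since this quantity depends only on $\bfe_{1:n}$, it coincides $\bfP$-a.s.\ with $\bfP(Z_{n}=0)$, so passing to the monotone limit in $n$ yields
$$ q(\bfe)\ =\ 1\,-\,\lim_{n\to\infty}\frac{1}{\Pi_{n}+R_{n}}\ =\ 1\,-\,\frac{1}{R_{\infty}}\quad\text{a.s.}, $$
which is \eqref{eq:q(bfe) explicit}. The almost sure bound $R_{\infty}\ge 1$ (and hence $q(\bfe)\in[0,1)$ a.s.) has already been recorded in \eqref{eq:Sinfty>=1}; one only needs to note that under (C1) either the strict inequality $A+B>1$ may hold on a set of positive probability or else $B=x(1-A)$ a.s.\ for some $x\ge 1$ (cf.\ Remark \ref{rem:A+B>=1}), both compatible with $R_{\infty}\ge 1$.

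For the second assertion, I would use that $\bfP(Z_{n}>0)=(\Pi_{n}+R_{n})^{-1}$ a.s., perform the elementary algebraic simplification
$$ \frac{1}{R_{n}}\,-\,\frac{1}{\Pi_{n}+R_{n}}\ =\ \frac{\Pi_{n}}{R_{n}(\Pi_{n}+R_{n})}, $$
and thereby reduce the claim to
$$ \frac{1}{\Pi_{n}}\left(\frac{1}{R_{n}}-\bfP(Z_{n}>0)\right)\ =\ \frac{1}{R_{n}(\Pi_{n}+R_{n})}\ \xrightarrow{n\to\infty}\ \frac{1}{R_{\infty}^{2}}\quad\text{a.s.}, $$
which follows at once from $R_{n}\to R_{\infty}\in[1,\infty)$ and $\Pi_{n}\to 0$ a.s.

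There is essentially no hard step here: once the conjugation identity \eqref{eq:def LF gf} has been propagated to the quenched formula \eqref{eq:survival probab quenched}, both parts of the theorem reduce to monotone pointwise convergence of elementary rational expressions in $\Pi_{n}$ and $R_{n}$. The only point that requires a moment of care is ensuring that the almost-sure limit $q(\bfe)$ defined in the text indeed agrees with the $n\to\infty$ limit of $q_{n}(\bfe_{1:n})$, but this is nothing more than monotone convergence applied under the conditional law $\bfP$.
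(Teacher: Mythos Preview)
Your proposal is correct and follows essentially the same approach as the paper: for \eqref{eq:q(bfe) explicit} the paper iterates \eqref{eq:1/1-q(bfe)} but explicitly notes that one may equivalently pass to the limit in \eqref{eq:survival probab quenched}, which is precisely what you do; for \eqref{eq:surv prob supercritical} your computation $\frac{1}{R_{n}}-\frac{1}{\Pi_{n}+R_{n}}=\frac{\Pi_{n}}{R_{n}(\Pi_{n}+R_{n})}$ is identical to the paper's.
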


Observe that \eqref{eq:surv prob supercritical} may also be stated as
\begin{equation*}
\bfP(Z_{n}>0)\ =\ \frac{1}{R_{n}}\ -\ \frac{\Pi_{n}}{R_{\infty}^{2}}\ +\ \Gamma_{n}
\end{equation*}
as $n\to\infty$, where $\Gamma_{n}$ satisfies $\Pi_{n}^{-1}\Gamma_{n}\to 0$ a.s.

\begin{proof}
The a.s. finiteness of $R_{\infty}$ follows by Prop. \ref{prop:GoldieMaller}. By iteration of \eqref{eq:1/1-q(bfe)} (or taking the limit in \eqref{eq:survival probab quenched}), we then infer
\begin{align*}
1\ \le\ \frac{1}{1-q(\bfe_{\geq 1})}\ =\ \frac{\Pi_{n}}{1-q(\bfe_{\geq n})}+R_{n}\ \stackrel{n\to\infty}{\longrightarrow}\ R_{\infty}\quad\text{a.s.}
\end{align*}
and therefore $q(\bfe)<1$ a.s. In order to get \eqref{eq:surv prob supercritical}, observe that, by another use of \eqref{eq:survival probab quenched},
\begin{align*}
\frac{1}{\Pi_{n}}\left(\frac{1}{R_{n}}-\bfP(Z_{n}>0)\right)\ =\ \frac{1}{R_{n}(\Pi_{n}+R_{n})}\ \stackrel{n\to\infty}{\longrightarrow}\ \frac{1}{R_{\infty}^{2}}\quad\text{a.s.}
\end{align*}
This completes the proof.\qed
\end{proof}
The following result describes the global limit behaviour of a supercritical process conditioned on non-extinction.
\begin{Theorem}\label{thm:Yaglom limit supercritical}
Let $(Z_{n})_{n\ge 0}$ be supercritical. Then $W_{n}:=\Pi_{n} Z_{n}$ for $n\ge 0$ forms a.s.~a mean one nonnegative martingale under the quenched probability measure $\bfP$ and thus converges a.s.~to a random variable $W_{\infty}$ having conditional law
\begin{equation}\label{eq:Kesten-Stigum}
\bfP(W_{\infty}\in\cdot)\ =\ \left(1-\frac{1}{R_{\infty}}\right)\delta_{0}\ +\ \frac{1}{R_{\infty}}\,\textit{Exp}\left(\frac{1}{R_{\infty}}\right)
\end{equation}
and particularly also mean one.
\end{Theorem}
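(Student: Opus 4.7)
The plan is to first establish the quenched martingale property and a.s.~convergence by a standard one-line calculation, and then identify the limit law by computing and inverting the quenched Laplace transform; the latter is particularly tractable because the quenched marginals are explicitly linear fractional.

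First, I would verify that $(W_n)$ is a mean-one nonnegative martingale under $\bfP$. Conditioned on generation $n$, each of the $Z_n$ individuals reproduces independently according to $LF(A_{n+1},B_{n+1})$, which has mean $1/A_{n+1}$. Hence $\bfE(Z_{n+1}\mid Z_0,\ldots,Z_n)=Z_n/A_{n+1}$ a.s., which gives
$$\bfE(W_{n+1}\mid W_0,\ldots,W_n)\ =\ \Pi_{n+1}Z_n/A_{n+1}\ =\ \Pi_n Z_n\ =\ W_n\quad\text{a.s.,}$$
with $\bfE W_0=1$. The martingale convergence theorem then yields $W_n\to W_\infty$ a.s.~under $\bfP$, with $\bfE W_\infty\le 1$ by Fatou's lemma.

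Next, to identify the conditional law of $W_\infty$, I would compute the quenched Laplace transform. For $\lambda>0$,
$$\bfE e^{-\lambda W_n}\ =\ \bfE\bigl(e^{-\lambda\Pi_n}\bigr)^{Z_n}\ =\ f_{1:n}(e^{-\lambda\Pi_n}),$$
and substituting $s=e^{-\lambda\Pi_n}$ into \eqref{eq:basic identity for f_1:n} gives
$$1-\bfE e^{-\lambda W_n}\ =\ \left(\frac{\Pi_n}{1-e^{-\lambda\Pi_n}}+R_n\right)^{-1}.$$
Since $(Z_n)$ is supercritical, $\Pi_n\to 0$ and $R_n\to R_\infty<\infty$ a.s., and the expansion $1-e^{-\lambda\Pi_n}=\lambda\Pi_n+O((\lambda\Pi_n)^2)$ yields $\Pi_n/(1-e^{-\lambda\Pi_n})\to 1/\lambda$ a.s. Bounded convergence (since $e^{-\lambda W_n}\in[0,1]$) permits passage to the limit inside $\bfE$, giving
$$\bfE e^{-\lambda W_\infty}\ =\ 1-\frac{\lambda}{1+\lambda R_\infty}\ =\ \left(1-\frac{1}{R_\infty}\right)\ +\ \frac{1}{R_\infty}\cdot\frac{1/R_\infty}{\lambda+1/R_\infty}\quad\text{a.s.}$$
The right-hand side is precisely the Laplace transform of the mixture $(1-1/R_\infty)\delta_0+R_\infty^{-1}\textit{Exp}(1/R_\infty)$, so uniqueness of Laplace transforms identifies the conditional law of $W_\infty$ as claimed. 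Differentiating at $\lambda=0$ (or reading off the mean of the mixture directly) confirms $\bfE W_\infty=1$, so the martingale is in fact uniformly integrable. As a sanity check, the atom at zero has mass $1-1/R_\infty=q(\bfe)$, matching \eqref{eq:q(bfe) explicit}, since $\{W_\infty=0\}$ coincides with the extinction event.

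The only subtle point, rather than a genuine obstacle, is handling the "a.s." qualifier coherently: all of the computations above live on an environment event of full $\Prob$-measure on which $\Pi_n\to 0$, $R_n\to R_\infty<\infty$, and the quenched martingale convergence simultaneously occur. On this event, the convergence of Laplace transforms holds for every fixed $\lambda>0$, which suffices by continuity (or by restriction to a countable dense set of $\lambda$) to identify the quenched law uniformly and thus yields the asserted a.s.~statement.
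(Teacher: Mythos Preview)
Your proof is correct and follows essentially the same route as the paper: compute the quenched Laplace transform $\bfE e^{-\lambda W_{n}}=f_{1:n}(e^{-\lambda\Pi_{n}})$ via the explicit linear fractional identity \eqref{eq:basic identity for f_1:n}, let $n\to\infty$ using $\Pi_{n}\to 0$ and $R_{n}\to R_{\infty}$, and recognize the limiting transform as that of the stated mixture. Your write-up is in fact a bit more careful than the paper's, making explicit the martingale verification, the use of bounded convergence, and the handling of the ``a.s.'' qualifier across $\lambda$.
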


\begin{proof}
That $(W_{n})_{n\ge 0}$ forms a nonnegative mean one $\bfP$-martingale for almost all realizations of $\bfe$ is a well-known fact. Its a.s.~convergence to some $W_{\infty}$ then follows by the martingale convergence theorem. Moreover, by computing the conditional Laplace transform of $W_{n}$ given $\bfe$, which a.s. converges to the conditional Laplace transform of $W_{\infty}$ given $\bfe$, we find
\begin{align*}
\bfE e^{-uW_{n}}\ &=\ f_{1:n}(e^{-u\Pi_{n}})\ =\ 1-(1-f_{1:n}(0))\frac{1-f_{1:n}(e^{-u\Pi_{n}})}{1-f_{1:n}(0)} \\
&=\ 1-\frac{1}{{R_{n}+\Pi_{n}}}\cdot\frac{R_{n}+\Pi_{n}}{R_{n}-\Pi_{n}/(1-e^{-u\Pi_{n}})}\\
&\hspace{-.68cm}\xrightarrow{n\to\infty}\ 1-\frac{1}{R_{\infty}}\cdot\frac{uR_{\infty}}{1+uR_{\infty}}\\
&=\ \left(1-\frac{1}{R_{\infty}}\right)\ +\ \frac{1}{R_{\infty}}\cdot\frac{R_{\infty}^{-1}}{R_{\infty}^{-1}+u}\quad\text{a.s.}
\end{align*}
and this shows \eqref{eq:Kesten-Stigum}.\qed
\end{proof}

It is well-known, see \cite[p.\,47ff]{Athreya+Ney:72}, that a supercitical GWP $\cZ=(Z_{n})_{n\ge 0}$ with one ancestor and extinction probability $0<q<1$ can be decomposed into two nontrivial parts, say $\cZ_{1}=(Z_{1,n})_{n\ge 0}$ and $\cZ_{2}=(Z_{2,n})_{n\ge 0}$, by dividing each generation into their individuals with a finite line of descent and those with an infinite line of descent, respectively. Then $\cZ_{1}$ and $\cZ_{2}$ are both again nontrivial GWP's, though for the last one the underlying probability measure must be chosen as $\wh\Prob:=\Prob(\cdot|Z_{n}\to\infty)$ (conditioning upon survival). If $f$ denotes the g.f.~of the offspring distribution of $\cZ$, then the offspring distributions of $\cZ_{1}$ and $\cZ_{2}$ have g.f. $g(s)=q^{-1}f(qs)$ and $h(s)=(1-q)^{-1}f(q+(1-q)s)$, respectively. With $P$ denoting the transition kernel of the Markov chain $\cZ$, the law of $\cZ_{1}$, known as the Harris-Sevastyanov transform, is actually nothing but the Doob $h$-transform of $\cZ$ under the positive $P$-harmonic function $\N_{0}\ni i\mapsto q^{i}$, see \cite[Thm.~3.1]{KleRosSag:07}. It is also stated there that, if $f$ is linear fractional, then $g$ is linear fractional, see \cite[Prop.~3.1]{KleRosSag:07}, and it should not be surprising that the same holds true for $h$. More precisely, if the law associated with $f$ is $LF(a,b)$, then the laws associated with $g$ and $h$ are
$$ LF\left(\frac{1}{a},\frac{a+b-1}{a}\right)\quad\text{and}\quad LF(a,1-a)\,=\,\Geom_{+}(a), $$
respectively, where $q=b^{-1}(a+b-1)$ should be recalled.

\vspace{.1cm}
After these preliminary remarks about the fixed environment case, we return to the situation of i.i.d.~random linear fractional offspring laws. Plainly, the decomposition into individuals with finite and infinite line of descent still works and the following result shows that the obtained processes $\cZ_{1},\cZ_{2}$ are again GWPRE's with random  linear fractional reproduction. On the other hand, the environment is no longer i.i.d.

\begin{Theorem}\label{thm:decomposition}
Let $\cZ=(Z_{n})_{n\ge 0}$ be supercritical with $0<q(\bfe)<1$ a.s. Put also $C_{n}:=A_{n}+B_{n}$ {\color{black} and $R_{n,\infty}=\Pi_{n}^{-1}(R_{\infty}-R_{n})$.} Then the following assertions hold in the given notation and for $\cZ_{1}$ and $\cZ_{2}$ as introduced before:
\begin{description}[(b)]\itemsep3pt
\item[(a)] $\cZ_{1}$ is a subcritical GWP in the ergodic stationary environment $(\bfe_{\geq n})_{n\ge 1}$ with quenched random linear fractional offspring law
\begin{gather}{\color{black}
\frac{(C_{n}-1)R_{n,\infty}}{C_{n}(R_{n,\infty}-1)}\,\delta_{0}\ +\ \frac{R_{n,\infty}-C_{n}}{C_{n}(R_{n,\infty}-1)}\,\Geom_{+}\left(\frac{R_{n,\infty}}{C_{n}R_{n+1,\infty}}\right)}\label{eq:laws of Z_1}
\intertext{and associated g.f.}
g_{n}(s)\ =\ \frac{f_{n}(q(\bfe_{\geq n+1})s)}{q(\bfe_{\geq n})}\ =\ \frac{R_{n,\infty}}{R_{n,\infty}-1}f_{n}\left(\frac{(R_{n+1,\infty}-1)s}{R_{n+1,\infty}}\right)
\label{eq:g.f. of Z_1}
\end{gather}
for each $n\ge 1$.
\item[(b)] Conditioned upon $\bfe$ and survival of $\cZ$, i.e.~under $\wh\bfP:=\bfP(\cdot|Z_{n}\to\infty)$, $\cZ_{2}$ is a nonextinctive GWP in varying environment with positive geometric offspring law
\begin{gather}
\Geom_{+}\left(1-\frac{B_{n}}{R_{n,\infty}}\right)\label{eq:laws of Z_2}
\shortintertext{and associated g.f.}
\begin{split}
h_{n}(s)\ &=\ \frac{f_{n}(q(\bfe_{\geq n+1})+(1-q(\bfe_{\geq n+1}))s)-q(\bfe_{\geq n})}{1-q(\bfe_{\geq n})}\\
&=\ \frac{R_{n,\infty}-B_{n})s}{R_{n,\infty}-B_{n}s}
\end{split}
\label{eq:g.f. of Z_2}
\end{gather}
for each $n\ge 1$.
\end{description}
\end{Theorem}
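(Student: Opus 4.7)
The plan is to treat the decomposition quenched-wise and apply the classical Harris--Sevastyanov identity generation by generation. The starting point is that, by Theorem~\ref{thm:extinction probability} applied to the shifted environment $\bfe_{\ge m}$, the quenched extinction probability seen from a single individual in generation $m-1$ equals $q(\bfe_{\ge m})$ and satisfies
\begin{equation*}
\frac{1}{1-q(\bfe_{\ge m})}\ =\ \sum_{k\ge 1}A_{m}\cdots A_{m+k-2}B_{m+k-1}\ =\ \Pi_{m-1}^{-1}(R_{\infty}-R_{m-1}),
\end{equation*}
together with the one-step recursion $1/(1-q(\bfe_{\ge m})) = A_{m}/(1-q(\bfe_{\ge m+1})) + B_{m}$ inherited from $q(\bfe_{\ge m}) = f_{m}(q(\bfe_{\ge m+1}))$ via identity \eqref{eq:def LF gf}. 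These identities, combined with the LF relation $1/(1-f_{n}(\sigma)) = A_{n}/(1-\sigma)+B_{n}$, will carry everything to explicit form.

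Next I would derive the offspring g.f.~of $\cZ_{1}$ and $\cZ_{2}$ by direct Bayes conditioning at each generation. An individual in generation $n-1$ of $\cZ_{1}$ has all its original children in $\cZ_{1}$ (finite line of descent passes trivially to every descendant), and each child independently shares this property with probability $q(\bfe_{\ge n+1})$. Weighting $p_{n,k}$ by $q(\bfe_{\ge n+1})^{k}/q(\bfe_{\ge n})$ (the Doob $h$-transform with positive harmonic function $i\mapsto q(\bfe_{\ge n+1})^{i}$ for the one-step quenched kernel) gives $g_{n}(s) = f_{n}(q(\bfe_{\ge n+1})s)/q(\bfe_{\ge n})$. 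Dually, an individual in generation $n-1$ of $\cZ_{2}$ contributes only its surviving children, which form a binomial thinning of parameter $1-q(\bfe_{\ge n+1})$ of its original offspring conditioned on being non-empty (the parent survives iff at least one child does), and this yields the stated expression for $h_{n}$. Since $\bfe$ is i.i.d., the shifts $(\bfe_{\ge n})_{n\ge 1}$ form a stationary ergodic sequence under the Bernoulli shift, giving the environmental statement in (a); under $\wh\bfP$ the environment is frozen and $\cZ_{2}$ lives in a varying deterministic environment.

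The final and most tedious step is to specialise to the LF case. For $g_{n}$, one writes $1-g_{n}(s) = (f_{n}(q(\bfe_{\ge n+1})) - f_{n}(q(\bfe_{\ge n+1})s))/q(\bfe_{\ge n})$, applies the LF identity at both arguments, and after a direct simplification the perpetuity recursion collapses the resulting rational function to LF form $1/(1-g_{n}(s)) = \alpha/(1-s) + \beta$ with constants matching \eqref{eq:laws of Z_1} and \eqref{eq:g.f. of Z_1}. For $h_{n}$, the analogous calculation shows that the $\delta_{0}$-atom vanishes and $h_{n}$ reduces to a pure $\Geom_{+}$ g.f.~with the parameter in \eqref{eq:laws of Z_2}, matching \eqref{eq:g.f. of Z_2}. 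Subcriticality of $\cZ_{1}$ is automatic since every individual lies in a finite subtree of $\cZ$, and non-extinction of $\cZ_{2}$ is built into the conditioning $\wh\bfP = \bfP(\cdot\mid Z_{n}\to\infty)$. The main obstacle is purely bookkeeping: keeping the indexing of the tail perpetuity $R_{n,\infty}$ consistent with the one-step shift between parent and children, so that the reduced coefficients in \eqref{eq:laws of Z_1}--\eqref{eq:g.f. of Z_2} land in the stated normalisation.
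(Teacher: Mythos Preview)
Your proposal is correct and follows essentially the same route as the paper: work quenched, invoke the classical Harris--Sevastyanov argument generation by generation to obtain $g_{n}(s)=f_{n}(q(\bfe_{\ge n+1})s)/q(\bfe_{\ge n})$ and $h_{n}(s)=(f_{n}(q(\bfe_{\ge n+1})+(1-q(\bfe_{\ge n+1}))s)-q(\bfe_{\ge n}))/(1-q(\bfe_{\ge n}))$, then specialise to LF. The only small divergence is in how the LF parameters of $g_{n}$ are extracted: you propose to compute $1/(1-g_{n}(s))$ directly from the LF identity, whereas the paper instead reads off the mixture parameters $\wh P_{n,0}=P_{n,0}/q(\bfe_{\ge n})$ and $1-\wh P=q(\bfe_{\ge n+1})(1-P)$ straight from the form of $s\mapsto f_{n}(q(\bfe_{\ge n+1})s)/q(\bfe_{\ge n})$ and then converts via \eqref{eq:(a,b)<->(p_0,p)} and \eqref{eq:LF->Geomplus}; for $h_{n}$ both you and the paper go through $1/(1-h_{n})$. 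This is a cosmetic difference in bookkeeping, not in strategy.
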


We note that $q(\bfe_{\geq n})$ in the above formulae may also be expressed in terms of  
$\Pi_{n}$ and $R_{n}$ for each $n$. Namely,
$$ R_{\infty}\ =\ \frac{1}{1-q(\bfe_{\geq 1})}\ =\ \frac{\Pi_{n}}{1-q(\bfe_{\geq n})}+R_{n} $$
implies
$$ \frac{1}{1-q(\bfe_{\geq n})}\ =\ \frac{R_{\infty}-R_{n}}{\Pi_{n}}\ {\color{black}=\ R_{n,\infty}\quad\text{and thus}\quad q(\bfe_{\geq n})\ =\ \frac{R_{n,\infty}-1}{R_{n,\infty}}}. $$

\begin{proof}
(a) Consider $\cZ$ under the quenched probability measure $\bfP$. Then an individual $\upsilon$, say, in generation $n-1$ for arbitrary $n\in\N$ produces a random number of offspring with law $LF(A_{n},B_{n})$ and g.f.~$f_{n}$, and each of these children has a finite line of descent with probability $q(\bfe_{\geq n})$, independent {\color{black} of} the other children. Therefore, by exactly the same argument as in the ordinary Galton-Watson case and recalling from \eqref{eq:q(bfe)} that $f_{n}(q(\bfe_{\geq n+1}))=q(\bfe_{\geq n})$, the law of the number of children of $\upsilon$ whose families eventually die out is again linear fractional with the asserted g.f.~$g_{n}$. In order to get \eqref{eq:laws of Z_1}, we argue as follows: It follows by {\color{black}\eqref{eq:LF->Geomplus}} that
$$ LF(A_{n},B_{n})\ = P_{n,0}{\color{black}\,\delta_{0}}\ +\ (1-P_{n,0})\Geom_{+}(P) $$
with ${\color{black}P}=A_{n}/C_{n}$ and ${\color{black}P_{n,0}}=(C_{n}-1)/C_{n}$. Let $\wh{P}_{n,0}$ and $\wh{P}$ denote the corresponding parameters of the law associated with $g_{n}$. Then one can directly see from the relation between $f_{n}$ and $g_{n}$ that $1-\wh{P}=q(\bfe_{\geq n+1})(1-P)$ and
\begin{gather*}
\wh{P}_{n,0}\ =\ g_{n}(0)\ =\ \frac{f_{n}(0)}{q(\bfe_{\geq n})}\ =\ \frac{P_{n,0}}{q(\bfe_{\geq n})}.
\end{gather*}
Finally use \eqref{eq:(a,b)<->(p_0,p)} {\color{black} and again \eqref{eq:LF->Geomplus}} to obtain \eqref{eq:laws of Z_1} after a little algebra.

\vspace{.1cm}
(b) As for \eqref{eq:g.f. of Z_2}, the argument is again the same as in the ordinary Galton-Watson case after conditioning with respect to $\bfe$ and $Z_{n}\to\infty$ and fixing an arbitrary individual $\upsilon$. By regarding its offspring with infinite line of descent, we arrive at a number the (quenched) law of which has indeed the asserted g.f.~$h_{n}$. In contrast to (a), a look at $1/(1-h_{n})$ then also easily provides \eqref{eq:laws of Z_2}. Further details are therefore omitted.\qed
\end{proof}

The following example shows that it is possible to have $q(\bfe_{\geq n})=q$ a.s.~for all $n\in\N_{0}$ and some $q\in (0,1)$ in a truly varying linear fractional environment.

\begin{Exa}\rm
Fix an arbitrary $q\in (0,1)$ and then a sequence $\bfe=(A_{n},B_{n})_{n\ge 1}$ of i.i.d.~nonconstant random vectors with generic copy $(A,B)$ and taking values in $\IRg\times\IRge$ satisfying
\begin{equation*}
\Prob (B>0)<1\quad\text{and}\quad\frac{A+B-1}{B}\,=\,q\quad\text{a.s.}
\end{equation*}
It follows that $A+B=1+Bq\ge 1$ a.s.~and $A=1-B(1-q)\in (0,1]$ a.s.~(Case (C1.2) from Prop.~\ref{prop:trichotomy}), thus our standing assumption \eqref{eq:parameter settings} is fulfilled. Moreover,
\begin{equation*}
\frac{1}{1-q}A+B\ =\ \frac{1}{1-q}\quad\text{a.s.}
\end{equation*}
The last degeneracy property ensures, as it must, that the perpetuity $R_{\infty}$ is a.s.~constant, namely (see \eqref{eq:q(bfe) explicit})
$$ R_{\infty}\ =\ \sum_{n\ge 1}\Pi_{n-1}B_{n}\ =\ \frac{1}{1-q}\sum_{n\ge 1}\Pi_{n-1}(1-A_{n})\ =\ \frac{1}{1-q}\quad\text{a.s.} $$
Writing $LF(A,B)$ as
$$ LF(A,B)\ =\ P_{0}\delta_{0}\ +\ (1-P_{0})\textit{Geom}_{+}(P), $$
we further have
$$ P_{0}\ =\ \frac{Bq}{1+Bq}\quad\text{and}\quad P\ =\ \frac{B}{1+Bq}\quad\text{a.s.} $$
This indicates that the random environment, by means of the random parameter $B\in [0,1/(1-q))$, modulates both the probability for having no offspring $P_{0}$ as well as the tail index $1-P$ of the offspring law, but keeps the extinction probability constant. As a consequence, the g.f.'s $g_{n}$ and $h_{n}$ in \eqref{eq:g.f. of Z_1} and \eqref{eq:g.f. of Z_2}, respectively, of the previous decomposition result also take the much simpler form $g_{n}(s)=q^{-1}f_{n}(qs)$ and $h_{n}(s)=(1-q)^{-1}(f_{n}(q+(1-q)s)-q)$ for each $n\ge 1$ and are thus of the same form as in the ordinary Galton-Watson case. As a further complete analogy, we finally mention that the a.s.~limit $W_{\infty}$ of the normalized martingale $W_{n}=Z_{n}/\Pi_{n}$, $n\ge 0$, has quenched law (see \eqref{eq:Kesten-Stigum})
$$ q\,\delta_{0}\ +\ (1-q)\textit{Exp}(1-q) $$
which does therefore not depend on the environment $\bfe$. On the other hand, the latter enters in the normalization of $Z_{n}$ as shown.
\end{Exa}

\section{The critical case}\label{sec:critical}

Finally, we take a quick look at the critical case when $R_{\infty}=R_{\infty}^{(-1)}=\infty$ and  note first that \eqref{eq:dist Zn given Zn>0 fw} and \eqref{eq:survival probability explicit} are still valid. But unlike the subcritical case, the Markov chain and autoregressive sequence $M_{n}:=R_{n}/\Pi_{n}$, $n\ge 0$, which satisfies the recursion
$$ M_{n}\ =\ \frac{1}{A_{n}}M_{n-1}\ +\ \frac{B_{n}}{A_{n}} $$
and figures in the parameters (see \eqref{eq2:dist Zn given Zn>0 fw} and \eqref{eq2:survival probability explicit} below), is no longer positive recurrent but convergent to $\infty$ in probability $(M_{n}\eqdist R_{n}^{(-1)}\!\uparrow\! R_{\infty}^{(-1)}=\infty$ a.s.$)$. Nonetheless the chain may still exhibit two different kinds of behavior depending on whether it be null recurrent or transient. The following theorem reflects this dichotomy as for its consequences for the quenched survival probability and the quenched conditional law of $Z_{n}$ and its mean given survival. Note that \eqref{eq:survival probability explicit} directly implies that $R_{n}\,\bfP(Z_{n}>0)=M_{n}/\bfE(Z_{n}|Z_{n}>0)$ for each $n\ge 1$.

\begin{Theorem}\label{thm:Yaglom limit critical case}
Let $(Z_{n})_{n\ge 0}$ be critical and thus $R_{\infty}^{(-1)}=\infty=R_{\infty}$ a.s. Denote by $\L=\L(\cdot|\bfe)$ the random set of accumulation points of the sequence $(R_{n}\,\bfP(Z_{n}>0))_{n\ge 1}=(M_{n}/\bfE(Z_{n}|Z_{n}>0))_{n\ge 1}$, and by $\bbD=\bbD(\cdot|\bfe)$ the random set of accumulation points of $(\bfP(Z_{n}\in\cdot|Z_{n}>0))_{n\ge 1}$ with respect to total variation distance. Then
\begin{gather}
R_{n}\,\bfP(Z_{n}>0)\ =\ \frac{M_{n}}{\bfE(Z_{n}|Z_{n}>0)}\ \xrightarrow{n\to\infty}\ 1\quad\text{a.s.},\label{eq:survival probability q}\\
\bfP\left(\frac{Z_{n}}{M_{n}}\in\cdot\bigg|Z_{n}>0\right)\ \weakly\ \textit{Exp}(1)\quad\text{a.s.}\label{eq:quenched exponential law}
\end{gather}
if $(M_{n})_{n\ge 0}$ is transient, whereas
\begin{gather}
\L\ =\ \left\{\frac{x}{1+x}:x\in\R\text{ recurrence point of }(M_{n})_{n\ge 0}\right\}\quad\text{a.s.},\label{eq:limit set survival probability q}\\
\bbD\ =\ \left\{\Geom_{+}\!\left(\frac{1}{1+x}\right):x\in\R\text{ recurrence point of }(M_{n})_{n\ge 0}\right\}\quad\text{a.s.}\label{eq:weak limit set q}
\end{gather}
if $(M_{n})_{n\ge 0}$ is recurrent.
\end{Theorem}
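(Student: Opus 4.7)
The strategy is to reduce everything to the asymptotic behavior of $(M_{n})_{n\ge 0}$ with $M_{n}=R_{n}/\Pi_{n}$. Since $Z_{n}$ is $\bfe_{1:n}$-measurable, \eqref{eq:dist Zn given Zn>0 fw} yields
$$ \bfP(Z_{n}\in\cdot\mid Z_{n}>0)\ =\ \Geom_{+}\!\left(\frac{1}{1+M_{n}}\right)\quad\text{a.s.}, $$
so $\bfE(Z_{n}\mid Z_{n}>0)=1+M_{n}$ a.s. Together with \eqref{eq:survival probability explicit} this gives
$$ R_{n}\,\bfP(Z_{n}>0)\ =\ \frac{R_{n}}{\Pi_{n}+R_{n}}\ =\ \frac{M_{n}}{1+M_{n}}\ =\ \frac{M_{n}}{\bfE(Z_{n}\mid Z_{n}>0)}\quad\text{a.s.} $$
Both assertions thus reduce to statements about the trajectory of $(M_{n})$, transported through the continuous map $\phi\colon[0,\infty]\to[0,1]$, $\phi(x)=x/(1+x)$, and through $p\mapsto\Geom_{+}(p)$, which is continuous in total variation on $(0,1]$.

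\emph{Transient case.} Since $(M_{n})$ lives on $[0,\infty)$ and evolves by the recursion $M_{n}=A_{n}^{-1}M_{n-1}+A_{n}^{-1}B_{n}$, transience forces $M_{n}\to\infty$ a.s. (the chain has to leave every compact subset of $[0,\infty)$ and has no other direction to go). Hence $\phi(M_{n})\to 1$ a.s., yielding \eqref{eq:survival probability q}. For \eqref{eq:quenched exponential law} I would verify convergence of the quenched Laplace transform of $Z_{n}/M_{n}$ given survival: for each $u>0$,
$$ \bfE\!\left(e^{-uZ_{n}/M_{n}}\mid Z_{n}>0\right)\ =\ \frac{(1+M_{n})^{-1}e^{-u/M_{n}}}{1-\tfrac{M_{n}}{1+M_{n}}\,e^{-u/M_{n}}}\ \xrightarrow{n\to\infty}\ \frac{1}{1+u}\quad\text{a.s.}, $$
the Laplace transform of $\textit{Exp}(1)$, which gives the asserted quenched weak convergence.

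\emph{Recurrent case.} For a topologically recurrent Markov chain on $[0,\infty)$, the trajectory's accumulation set in $[0,\infty)$ coincides with the set $\cR$ of topological recurrence points; and by ergodicity of the shift on $\bfe$ (using that ``$x$ is an accumulation point of $(M_{n})$'' is a tail event in $\bfe$ for each fixed $x\in\R$), $\cR$ is a.s.~constant. Applying $\phi$ then yields \eqref{eq:limit set survival probability q}, and applying the total-variation-continuous map $p\mapsto\Geom_{+}(p)$ yields \eqref{eq:weak limit set q}.

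\emph{Main obstacle.} The delicate step is the recurrent case: one must justify (i) the identification of the trajectory's accumulation set with $\cR$ for the specific IFS $(M_{n})$, via a standard topological-recurrence dichotomy together with mild irreducibility/continuity properties of the IFS, and (ii) the a.s.~constancy of $\cR$, which follows from ergodicity of $\bfe$. A secondary subtlety is ruling out $M_{n}\to\infty$ along a subsequence (which would contribute $1$ to $\L$); this is implicit in the interpretation of ``recurrent'' as tightness of the trajectory.
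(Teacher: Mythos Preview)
Your proposal is correct and follows essentially the same route as the paper: reduce everything to the identities $\bfP(Z_{n}\in\cdot\,|Z_{n}>0)=\Geom_{+}((1+M_{n})^{-1})$ and $R_{n}\,\bfP(Z_{n}>0)=M_{n}/(1+M_{n})$, then read off the conclusions from the behavior of $(M_{n})$. The paper's own proof is in fact terser than yours---after recording these two identities it simply notes that $\theta Y(\theta)\idist\textit{Exp}(1)$ when $Y(\theta)\eqdist\Geom_{+}(\theta)$ and $\theta\downarrow 0$, and declares the rest ``easily verified.'' Your Laplace-transform computation and your remarks on identifying the accumulation set with the recurrence set (and its a.s.\ constancy via ergodicity) make explicit exactly what the paper leaves implicit, so there is no genuine divergence in approach.
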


\begin{proof}
By \eqref{eq:dist Zn given Zn>0 fw} and \eqref{eq:survival probability explicit}, we have
\begin{gather}
\bfP(Z_{n}\in\cdot|Z_{n}>0)\ =\ \Geom_{+}\left(\frac{1}{1+M_{n}}\right)\quad\text{a.s.}\label{eq2:dist Zn given Zn>0 fw}
\shortintertext{and}
R_{n}\,\bfP(Z_{n}>0)\ =\ \left(1+\frac{\Pi_{n}}{R_{n}}\right)^{-1}\ =\ \left(1+\frac{1}{M_{n}}\right)^{-1}\quad\text{a.s.}\label{eq2:survival probability explicit}
\end{gather}
Further noting that, if $Y(\theta)\eqdist\Geom_{+}(\theta)$, then $\theta Y(\theta)\idist\textit{Exp}(1)$ as $\theta\downarrow 0$, all assertions are easily verified.\qed
\end{proof}

\begin{Rem}\rm
As for the autoregressive Markov chain $(M_{n})_{n\ge 0}$, it must be acknowledged that, unlike positive recurrence, there seems to be no complete classification of null recurrence and transience of that chain in terms of the random parameter $(A,B)$; for the contractive case when $\Pi_{n}\to 0$ a.s.~we mention the work by the author with Buraczewski and Iksanov \cite{AlsBurIks:17} and by Zerner \cite{Zerner:18}, and for the critical case considered in this section the classical work by Babillot, Bougerol and Elie \cite{BabBouElie:97} and the very recent article by the author with Iksanov \cite{AlsIks:21}. A look at the latter one gives rise to the conjecture that necessary and sufficient conditions are difficult to come by.
\end{Rem}

\begin{Rem}\rm
Since $\bfP^{(n:1)}(Z_{n}\in\cdot|Z_{n}>0)=\Geom_{+}((1+R_{n}^{(-1)})^{-1})$ for any $n$ and $R_{n}^{(-1)}\to\infty$ a.s., the dichotomy encountered in the above theorem disappears under reversal of the environment. Namely,
\begin{equation*}
\bfP^{(n:1)}\left(\frac{Z_{n}}{R_{n}^{(-1)}}\in\cdot\bigg|Z_{n}>0\right)\ \idist\ \textit{Exp}(1)\quad\text{a.s.}
\end{equation*}
\end{Rem}

Let us finally touch very briefly on annealed results by taking the survival probability $\Prob(Z_{n}>0)$ as an example. Eqs.~\eqref{eq2:survival probability explicit} and \eqref{eq:survival probability explicit} provide
$$ \Prob(Z_{n}>0)\ =\ \Erw\left(\frac{1}{R_{n}}\left(1+\frac{1}{M_{n}}\right)^{-1}\right)\ =\ \Erw\left(\frac{1}{\Pi_{n}+R_{n}}\right), $$
and Kozlov \cite{Kozlov:76} embarked on the last expression, written in the form
$$ \Erw\left(e^{-S_{n}}+\sum_{k=1}^{n}e^{-S_{k-1}}B_{k}\right)^{-1} $$
(see his Eq.~(10)), to show that
$$ n^{1/2}\,\Prob(Z_{n}>0)\ \xrightarrow\ \beta $$
for some positive constant $\beta$ under the additional moment conditions
\begin{gather*}
0<\Erw\log^{2}\!A<\infty,\quad\Erw B<\infty\quad\text{and}\quad\Erw B|\log A|<\infty.
\end{gather*}
This result was later extended by Geiger and Kersting \cite{GeiKerst:00} to general critical GWPRE under corresponding moment conditions. Regarding the behavior of the annealed law of $Z_{n}$ given $Z_{n}>0$ in the linear fractional case, we finally mention that Afanasyev \cite{Afanasyev:93,Afanasyev:97} showed the weak convergence of the process $(\Pi_{\lfloor nt\rfloor} Z_{\lfloor nt\rfloor})_{0\le t\le 1}$ in the Skorohod space $D([u,1])$ for each $u\in (0,1)$. In view of Theorem \ref{thm:Yaglom limit critical case}, notably \eqref{eq:quenched exponential law}, this shows that the normings in quenched and annealed regime are different, namely $M_{n}=R_{n}/\Pi_{n}$ versus $1/\Pi_{n}=o(M_{n})$, when the chain $(M_{n})_{n\ge 0}$ is transient. For extensions and further relevant literature, we refer again to \cite[Section 5.8]{KerstingVatutin:17}.

\section{Concluding remarks}

Being aware that our selection of -- essentially known -- results might be seen as somewhat arbitrary and therefore cause reservations of readers especially in the branching process community, we would like to stress once again that we have aimed at offering a different vantage point than earlier publications by adopting a perspective (with respective notation) that is more familiar in the study of random difference equations and their asymptotic properties. By thus putting the focus on the connections of linear fractional GWPRE with these equations, one can observe in a very explicit way how a ``breathing'' or ``fluctuating'' environment impacts on a process evolving in it. Needless to say that we could have discussed many more results, and that there is also plenty of room for extensions, like to the mutlitype setting or even to branching models with interaction. In another direction, Lindo and Sagitov \cite{SagitovLindo:16}, based on the dissertation of the second author \cite{Lindo:16}, have introduced a special class of Galton-Watson processes with explosions, called theta-branching processes, that have similar closure properties as linear fractional branching processes regarding the laws of their marginals. They could therefore be studied in a random environment setting with a similar focus. We refer to future work.

\bibliographystyle{abbrv}
\bibliography{StoPro}

\end{document}